\documentclass[a4paper,11pt,reqno]{amsart}

\usepackage{enumerate}
\usepackage{amssymb, amsmath}
\usepackage{mathrsfs}
\usepackage{amscd}
\usepackage{verbatim}

\renewcommand{\emptyset}{\varnothing}

\theoremstyle{plain}
\newtheorem{theorem}{Theorem}[section]
\newtheorem{corollary}[theorem]{Corollary}
\newtheorem{lemma}[theorem]{Lemma}
\newtheorem{proposition}[theorem]{Proposition}

\theoremstyle{remark}
\newtheorem{remark}[theorem]{Remark}
\newtheorem{example}[theorem]{Example}

\numberwithin{equation}{section}


\def\R{{\mathbb R}}


\newcommand{\F}{{\mathcal F}}


\renewcommand{\S}{\Sigma}
\newcommand{\eps}{\varepsilon}


\DeclareMathOperator{\tr}{tr}


\newcommand{\beq}{\begin{equation}}
\newcommand{\eeq}{\end{equation}}
\newcommand{\bal}{\begin{aligned}}
\newcommand{\eal}{\end{aligned}}
\newcommand{\ben}{\begin{enumerate}}
\newcommand{\beni} {\begin{enumerate}[(i)]}
\newcommand{\een}{\end{enumerate}}
\newcommand{\bit}{\begin{itemize}}
\newcommand{\eit}{\end{itemize}}
\newcommand{\beqw}{\begin{equation*}}
\newcommand{\eeqw}{\end{equation*}}
\newcommand{\bthm}{\begin{theorem}}
\newcommand{\ethm}{\end{theorem}}
\newcommand{\bpr}{\begin{proposition}}
\newcommand{\epr}{\end{proposition}}
\newcommand{\ble}{\begin{lemma}}
\newcommand{\ele}{\end{lemma}}
\newcommand{\blem}{\begin{lemma}}
\newcommand{\elem}{\end{lemma}}
\newcommand{\bpf}{\begin{proof}}
\newcommand{\epf}{\end{proof}}
\newcommand{\bex}{\begin{example}}
\newcommand{\eex}{\end{example}}
\newcommand{\bre}{\begin{example}}
\newcommand{\ere}{\end{example}}
\newcommand{\bma}{\begin{bmatrix}}
\newcommand{\ema}{\end{bmatrix}}


\newcommand{\Dom}{{\mathsf D}}

\newcommand{\one}{{{\bf 1}}}

\newcommand{\ip}[1]{\langle {#1}\rangle}

\newcommand{\ov}{\overline}
\newcommand{\ot}{\otimes}

\newcommand{\G}{\Gamma}
\renewcommand{\phi}{\varphi}

 \newcommand{\calP}{\mathscr{P}}
 \renewcommand{\H}{\mathcal{H}}
 \newcommand{\V}{\mathcal{V}}
 \renewcommand{\hat}{\widehat}

  \renewcommand{\r}{\mathbf{r}}
 \newcommand{\ux}{\underline{x}}
 \newcommand{\ox}{\overline{x}}
 \newcommand{\uxi}{\underline{x^i}}
 \newcommand{\oxi}{\overline{x^i}}
 \newcommand{\uy}{\underline{y}}
 \newcommand{\oy}{\overline{y}}
 \newcommand{\ut}{\underline{k}}
 \renewcommand{\ot}{\overline{k}}
 \newcommand{\us}{\underline{j}}
 \newcommand{\os}{\overline{j}}

 \newcommand{\thh}{{\widetilde{h}}}
 \newcommand{\h}{\mathbf{h}}
 \newcommand{\hx}{{\widehat{x}}}

\begin{document}

\title[A Trotter product formula for gradient flows]
{A Trotter product formula for gradient flows in metric spaces}

\author{Philippe Cl\'ement}
\address{
Delft Institute of Applied Mathematics\\
Delft University of Technology\\ P.O. Box 5031\\ 2600 GA
Delft\\The Netherlands} 
\email{ppjeclement@aim.com}

\author{Jan Maas}
\address{
Institute for Applied Mathematics\\
University of Bonn\\
Endenicher Allee 60\\
53115 Bonn\\
Germany}
\email{maas@iam.uni-bonn.de}

\thanks{The second named author is supported by Rubicon subsidy 680-50-0901 of the Netherlands Organisation for Scientific Research (NWO)}

\keywords{Gradient flows, Trotter product formula, splitting method, Fokker Planck equations}

\subjclass[2000]{Primary  49Q20; Secondary:  35A15, 47H20,
82C31}

 \begin{abstract}
We prove a Trotter product formula for gradient flows in metric
spaces. This result is applied to establish convergence in the $L^2$-Wasserstein metric of the splitting method for some Fokker-Planck equations and porous medium type equations perturbed by a potential.
 \end{abstract}

\maketitle
 \section{Introduction and statement of the main results} \label{sec:intro}

In the first part of \cite{AGS} Ambrosio, Gigli and Savar{\'e} developed a rich theory of gradient flows in metric spaces. In particular they studied in great detail the following situation.

Let $(X,d)$ be a complete metric space, and let $\phi : X \to \R \cup \{ + \infty\}$ be a lower semicontinuous functional which is not identically $+ \infty.$
Associated with $\phi$ is the \emph{Moreau-Yosida functional} $\Phi$ defined for $h > 0$ and $x \in X$ by
\begin{align*}
  \Phi(h,x;y) :=
   \left\{ \begin{array}{ll}
 \phi(y) + \frac{1}{2h} d^2(x,y),
 & \text{$y \in \Dom(\phi)$},\\
 + \infty,
 & \text{otherwise,}
 \end{array} \right.
\end{align*}
where $\Dom(\phi) := \{ x\in X : \phi(x)  < \infty \}.$
In \cite{AGS} conditions are given which guarantee
\begin{enumerate}
\item
existence and uniqueness of a global minimizer of $\Phi(h,x; \cdot),$ which is denoted by $J_h x$ and called the \emph{resolvent} of $\phi$ at $x$;
\item
convergence of sequences of iterated resolvents $\{ (J_{t/n})^n x \}_{n \geq 1}$;
\item
the validity of a certain \emph{evolution variational inequality} (EVI) for the limit.
\end{enumerate}

In the second part of \cite{AGS} the theory is applied to problems in the space of probability measures where the functional $\phi$ can be naturally written as the sum of two (or more) functionals $\phi^i,$ $i = 1,2.$ It appears that in most cases one can associate with each $\phi^i$ a resolvent $J_h^i$. It is therefore natural to consider the problem of convergence of sequences of iterates of the form $\{ \big(J_{t/n}^2 J_{t/n}^1\big)^n  x\}_{n \geq 1}$ provided they are well-defined and to investigate whether the limit satisfies the EVI associated with $\phi.$
In this paper we give sufficient conditions for this to be true (Theorem \ref{thm:trotter-formula}).

We apply our abstract results to establish convergence with respect to the $L^2$-Wasserstein metric of the splitting method for Fokker-Planck equations and porous medium equations with a potential satisfying appropriate conditions.

\medskip

Let us now present the setting of the paper and state the main results.
Throughout the paper, we let $(X,d)$ be a complete metric space.
For $i = 1,2,$ let $\phi^i : X \to \R \cup \{ + \infty \}$ be a lower semicontinuous (lsc) functional satisfying
\begin{align*}
 \Dom(\phi^1) \cap \Dom(\phi^2) \neq \emptyset.
\end{align*}
We consider the functional $\phi := \phi^1 + \phi^2$ defined by
\begin{align*}
 \Dom(\phi) &:=  \Dom(\phi^1) \cap \Dom(\phi^2), \\
      \phi(x) &:=  \phi^1(x) + \phi^2(x), \quad x\in \Dom(\phi),
\end{align*}
and note that $\Dom(\phi) \neq \emptyset,$ and $\phi$ is lower semicontinuous.

We shall impose three assumptions:

\bit
\item[($A_1$)] For $i = 1,2,$ for any $h > 0$ and any $x\in \overline{\Dom(\phi^i)}$, the following variational inequality has a solution:
\bit
\item[] \emph{ find $y\in \Dom(\phi^i)$ satisfying
\begin{equation}\label{eq:DEVI}
\frac{1}{2h}[d^2(y,z)-d^2(x,z)]+\frac{1}{2h}d^2(y,x)+\phi^i(y)
\leq \phi^i(z)
\end{equation}
for all $z\in \Dom(\phi^i).$}
\eit
\eit

Clearly, if $y\in \Dom(\phi^i)$ satisfies \eqref{eq:DEVI}, then $y$ is a global minimizer of $\Phi^i(h,x;\cdot)$. Since
$\frac1{2h} d^2(y,z)+\Phi^i(h,x;y)\leq \Phi^i(h,x;z)$
for every $z\in \Dom(\phi^i)$, this global minimizer is unique. We will denote the minimizer by $J_h^i x.$ Notice that for $x \in \Dom(\phi)$ and $h > 0$ we have
\begin{align*}
 \phi^i(J_h^i x) \leq \phi(x),
\end{align*}
as can be seen by setting $z = x$ in \eqref{eq:DEVI}.

\bit \item[($A_2$)]
For any $h > 0$ we have
  \begin{align*}(i)\qquad
 J_h^1\big(\ov{\Dom(\phi^1)}\cap\Dom(\phi^2) \big)
        & \subseteq\ov{\Dom(\phi^2)},\\
        (ii)\qquad
 J_h^2 \big(\Dom(\phi^1)\cap\ov{\Dom(\phi^2)} \big)
        & \subseteq \Dom(\phi^1).
  \end{align*}
\eit
For $h > 0$ and $x \in \ov{\Dom(\phi^1)} \cap \Dom(\phi^2)$ we define
\begin{align*}
  K_h x := J_h^2 J_h^1 x.
\end{align*}
It follows from $(A_2)$ that $K_h x \in \Dom(\phi).$ In particular, it follows that $K_h$ maps $\Dom(\phi)$ into itself.

A \emph{discretisation} $\h$ is a finite sequence of positive numbers $(h_i)_{i=1}^n \subseteq (0, \infty).$ For $k =1, \ldots, n$ we set
\begin{align*}
   |\h| := \sup_{1\leq k \leq n} h_k, \qquad
    t_\h^0 := 0, \qquad
    t_\h^k := 2 \sum_{j = 1}^k h_j.
\end{align*}
Given $x \in \Dom(\phi)$ and a discretisation $\h = (h_i)_{i=1}^n,$ the discrete scheme is defined for $k =1, \ldots, n$ by
\begin{align*}
x_\h^0 := x, \qquad
    \hx_\h^k := J_{h_k}^1 x_\h^{k-1}, \qquad
      x_\h^k := J_{h_k}^2 \hx_\h^k 
   	    		  = K_{h_k} x_\h^{k-1}.
\end{align*}
We shall associate with the discretisation the piecewise constant function $\ux_{\h} \text{ (resp. $\ox_{\h}$)} : [0,t_\h^n] \to X$ which takes the values $x_\h^k$ at $t_\h^k,$ $0 \leq k \leq n,$ is constant on the intervals $(t_\h^{k-1}, t_\h^k),$ $1 \leq k \leq n,$ and is right-continuous (resp. left-continuous).

To motivate the next assumption, let us remark that, as we have seen above, for $x \in \Dom(\phi^i)$ we have $\phi^i(J_h^i x) - \phi^i(x) \leq 0,$ $i =1,2,$ but in general we do not have any bound for $\phi^1(J_h^2 x) - \phi^1(x)$ with $x \in \Dom(\phi^1).$ The next assumption provides some control on this quantity. For $k = 1, \ldots, n$ we set
\begin{align*}
 \delta_{\h,x}^k := [\phi^1( x_\h^k ) - \phi^1( \hx_\h^k )]^+, \qquad
  \Delta_{\h,x}^k := \sum_{j=1}^k \delta_{\h,x}^k.
\end{align*}
We can now state the assumption:
\bit
\item[($A_3$)]
There exists a lsc functional $\chi : X \to \R \cup \{ +\infty \},$ not identically $+ \infty,$ such that the following holds:
for any $w \in \ov{\Dom(\phi)}$ and any $h_*, T, R, U > 0$ there exists  $K \in (0,\infty)$ such that
for any discretisation $\h = (h_i)_{i=1}^n$ satisfying
\begin{align} \label{eq:quant1}
 |\h|\leq h_*, \qquad
  t_\h^n \leq T,
\end{align}
and any $x \in \Dom(\phi)$ satisfying
\begin{align} \label{eq:quant2}
 d^2(x, w) \leq R, \qquad
   \chi(x) \leq U, \qquad
\end{align}
we have
\begin{align*}
 \Delta_{\h,x}^n \leq K.
\end{align*}
\eit

Now we are ready to state the main result of this paper.

 \begin{theorem}  \label{thm:trotter-formula}
Assume that $(A_1), (A_2), (A_3)$ hold.
Let $T_* > 0$ and let $(\h^i)_{i \geq 1}$ be a sequence of discretisations, where $\h^i = (h_k^i)_{k=1}^{n^i},$ such that
\begin{align*}
(i) \quad \inf_{i\geq 1} t_{\h^i}^{n^i} \geq T_* \qquad \text{ and } \qquad
(ii) \quad
 \lim_{i \to \infty} |\h^i| = 0.
\end{align*}
Let $x \in \Dom(\phi)$ and let $(x^i)_{i\geq 1} \subseteq \Dom(\phi)$ be a sequence satisfying
\begin{align*}
(iii) \quad \lim_{i \to \infty} x^i = x,
   \qquad 
 (iv) \quad  \sup_{i \geq 1} \chi(x^i) < \infty,
\qquad
  (v) \quad   \sup_{i \geq 1} \phi(x^i) < \infty.
\end{align*}
Then the sequences $\{ {\uxi}_{\h^i} \}_{i \geq 1}$ and $\{ {\oxi}_{\h^i} \}_{i \geq 1}$ converge uniformly on $[0,T_*]$ to a continuous function $u : [0, T_*] \to X$ which satisfies $u(0) = x,$  $\phi \circ u \in L^1((0,T_*);\R)$ and
\begin{align} \label{eq:gradientFlow}
 \frac{d}{dt} \frac{1}{2} d^2(u(t), y) \leq
      \phi(y) - \phi(u(t))
\end{align}
in the sense of distributions on $(0,T_*)$ for any $y \in \Dom(\phi).$
 \end{theorem}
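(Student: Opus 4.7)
The plan is to adapt the Ambrosio--Gigli--Savaré variational strategy for a single implicit-Euler scheme to the splitting scheme $K_{h_k} = J_{h_k}^2 J_{h_k}^1$. The starting point is a \emph{discrete EVI with error}: I apply the DEVI in $(A_1)$ to $J_{h_k}^1$ at $x_\h^{k-1}$ (with test $z \in \Dom(\phi)$) and to $J_{h_k}^2$ at $\hx_\h^k$ (with the same $z$), and add. The cross-terms $\pm \frac{1}{2h_k} d^2(\hx_\h^k, z)$ cancel, and using
\[
\phi^1(\hx_\h^k) + \phi^2(x_\h^k) = \phi(x_\h^k) - \bigl(\phi^1(x_\h^k) - \phi^1(\hx_\h^k)\bigr) \geq \phi(x_\h^k) - \delta_{\h,x}^k,
\]
one obtains
\[
\frac{1}{2h_k}\bigl[d^2(x_\h^k, z) - d^2(x_\h^{k-1}, z)\bigr] + \frac{d^2(\hx_\h^k, x_\h^{k-1}) + d^2(x_\h^k, \hx_\h^k)}{2h_k} + \phi(x_\h^k) \leq \phi(z) + \delta_{\h,x}^k.
\]
I note that $z = x_{\h^j}^\ell \in \Dom(\phi)$ is an admissible test point by $(A_2)$.

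Next I collect a priori bounds. Scaling by $h_k$ and summing over $k$ telescopes the distance differences. Fixing $z = z_0 \in \Dom(\phi)$ and invoking $(A_3)$ with $\sum_k h_k \delta_{\h,x^i}^k \leq |\h^i| K \to 0$ gives uniform bounds on $d(x_{\h^i}^k, z_0)$, on the squared-displacement sum $\sum_k [d^2(\hx_{\h^i}^k, x_{\h^i}^{k-1}) + d^2(x_{\h^i}^k, \hx_{\h^i}^k)]$, and on $\sum_k h_k^i \phi(x_{\h^i}^k)$ (yielding $\phi\circ u \in L^1$ in the limit via lower semicontinuity). Combining the displacement sum with $d(x_\h^{k-1}, x_\h^k) \leq d(x_\h^{k-1}, \hx_\h^k) + d(\hx_\h^k, x_\h^k)$ and Cauchy--Schwarz gives the uniform Hölder-$\tfrac12$ estimate $d(x_\h^j, x_\h^k) \leq C\sqrt{t_\h^k - t_\h^j}$, so that $\ux_{\h^i}$ and $\ox_{\h^i}$ are equicontinuous and asymptotically coincide.

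Since $X$ is not assumed locally compact, convergence follows by a Cauchy argument. For indices $i,j$ I apply the discrete EVI for $\h^i$ at step $k$ with test $z = x_{\h^j}^\ell$, and symmetrically for $\h^j$ at step $\ell$ with $z = x_{\h^i}^k$. Setting $D_{k,\ell} := d^2(x_{\h^i}^k, x_{\h^j}^\ell)$, the sum of the two inequalities produces the two-variable discrete monotonicity
\[
\frac{D_{k,\ell} - D_{k-1,\ell}}{2 h_k^i} + \frac{D_{k,\ell} - D_{k,\ell-1}}{2 h_\ell^j} \leq \delta_{\h^i,x^i}^k + \delta_{\h^j,x^j}^\ell.
\]
A discrete Gronwall iteration along a monotone path in the $(k,\ell)$-grid tracking $t_{\h^i}^k \approx t_{\h^j}^\ell$, combined with the $h$-weighted error bound $\sum_k h_k\delta^k \leq |\h|K$ and the convergence $x^i \to x$ from $(iii)$, controls $D_{k,\ell}$ by $d^2(x^i,x^j)$ plus $O(|\h^i|+|\h^j|)$. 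Hence $\{\ux_{\h^i}\}$ is uniformly Cauchy in $C([0,T_*];X)$, and completeness of $X$ delivers the continuous limit $u$.

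Finally I pass to the limit in the integrated discrete EVI. Scaling by $h_k$ and summing from $k$ with $t_\h^k = s$ to $t_\h^k = t$ yields
\[
\tfrac12\bigl[d^2(x_\h^\ell, z) - d^2(x_\h^m, z)\bigr] + \int_s^t \phi(\ox_\h(r))\,dr \leq (t-s)\phi(z) + O(|\h|),
\]
and uniform convergence $\ox_{\h^i} \to u$, continuity of $d^2$, and Fatou's lemma applied to the lsc functional $\phi$ (via $\liminf_i \phi(\ox_{\h^i}(r)) \geq \phi(u(r))$) deliver the integrated form of \eqref{eq:gradientFlow}, which is equivalent to the distributional statement. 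The hard part is the Cauchy estimate: the splitting breaks the clean symmetry of the single-resolvent AGS argument, and the error terms $\delta_{\h,x}^k$—being only cumulatively bounded by $(A_3)$ rather than pointwise small—must be propagated through the two-variable discrete Gronwall iteration in such a way that their $h_k$-weighted sums vanish with $|\h|$, which is ultimately what makes convergence (as opposed to mere subsequential compactness) possible.
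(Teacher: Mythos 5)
Your overall strategy closely mirrors the paper's: a discrete EVI with error term $\delta_{\h,x}^k$ (the paper's Lemma \ref{lem:crucialQuadRef}), a priori bounds, a Cauchy estimate obtained by testing the discrete EVI for one scheme at the points of the other and combining (the paper's Corollary \ref{cor:d-est} and Proposition \ref{prop:trotter-est}), and passage to the limit by Fatou and lower semicontinuity. The main structural difference is cosmetic: you stay fully discrete on the $(k,\ell)$-grid, while the paper works with piecewise-affine interpolants $d_{\h,x}^2(t,y)$, $\phi_{\h,x}(t)$, $R_{\h,x}(t)$ and pointwise derivatives in $t$; these are essentially equivalent ways of organizing the same telescoping.

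There is, however, a genuine gap in your a priori estimate. Telescoping the scaled DEVI with a fixed test point $z_0\in\Dom(\phi)$ yields
\[
d^2(x_\h^m, z_0) \leq d^2(x_\h^0, z_0) + 2\sum_{k=1}^m h_k\big(\phi(z_0) - \phi(x_\h^k)\big) + 2\sum_{k=1}^m h_k\,\delta_{\h,x}^k,
\]
and you claim that invoking $(A_3)$ "gives uniform bounds on $d(x_{\h^i}^k, z_0)$." But $\phi$ carries no assumed lower bound, so the term $-\sum_k h_k\phi(x_\h^k)$ is a priori uncontrolled. The only available lower bound is the Moreau--Yosida one from \eqref{eq:new2}, namely $\phi(x_\h^k) \geq \hat\phi_{\thh}(w) - \frac{1}{\thh}d^2(x_\h^k, w)$, which reintroduces the unknown distance on the right-hand side. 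Breaking this circularity is precisely what the discrete Gronwall lemma (Lemma \ref{lem:Gronwall}) is used for inside Proposition \ref{prop:a-priori}. Your proposal deploys Gronwall only in the later Cauchy step, where in fact the paper does not need it at all; without it in the a priori step, the claimed uniform distance bound, the displacement-sum bound, and the bound on $\sum_k h_k^i\phi(x_{\h^i}^k)$ are all unjustified, and every subsequent estimate rests on them. (Relatedly, your closing remark that "the hard part is the Cauchy estimate" has the emphasis reversed: given the a priori estimates, the Cauchy step follows by adding the two diagonal inequalities and integrating, while the genuinely delicate point is exactly the a priori bound you glossed over.)
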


\begin{remark}\label{rem:gradientFlow}
More explicitly, \eqref{eq:gradientFlow} means that  \begin{align*}
 - \frac{1}{2}  \int_0^\infty d^2(u(t), y) \zeta'(t) \,dt\leq \int_0^\infty
  \big(\phi(y) - \phi(u(t))\big) \zeta(t) \, dt
 \end{align*}
for any non-negative test function $\zeta \in C_c^\infty((0,\infty);\R).$
Equivalently, see, e.g, \cite{ClD10}, for any $0 < a < b < \infty,$   \begin{align} \label{eq:intSol}
    \frac{1}{2} d^2(u(b), y) -  \frac{1}{2} d^2(u(a), y)
     \leq  (b-a)\phi(y) - \int_a^b  \phi(u(t)) \, dt.
  \end{align}
 \end{remark}

\begin{remark}\label{rem:uniqueness}
Existence and uniqueness of a solution to \eqref{eq:gradientFlow} for $x \in
\ov{\Dom(\phi)}$ has been proved in \cite[Theorem 4.0.4]{AGS}
under suitable coercivity and convexity assumptions which imply $(A_1)$ for $\phi$.
Here we do not assume that $(A_1)$ holds for $\phi.$ Therefore the existence of a gradient flow for $\phi$ does not follow from the results in \cite{AGS}.
\end{remark}

\begin{remark}\label{rem:other-bound} 
As we observed before, in $(A_3)$ we impose a bound for $\phi^1(J_h^2 x) - \phi^1(x)$ with $x \in \Dom(\phi^1).$ Note however that we do \emph{not} assume any bound for $\phi^2(J_h^1 x) - \phi^2(x)$ with $x \in \Dom(\phi^2).$
\end{remark}

\begin{remark}\label{rem:Hilbert}
Convergence of the splitting method is well-known in the case where $X$ is a Hilbert space and each $\phi^i$ is a convex functional \cite{Bre73, KaMa78}. If $X$ is a Hilbert space, then our assumptions are more restrictive than the ones in \cite{KaMa78}.
\end{remark}

\begin{remark}\label{rem:}
It follows from the theory presented in \cite{AGS}, see also \cite{Cl-Bielefeld}, that  for $i = 1, 2,$ $(A_1)$ implies the existence of a semigroup of operators $S_t^i : \ov{\Dom(\phi^i)} \to \Dom(\phi^i),$ $t\geq 0,$ such that for any $x \in \ov{\Dom(\phi^i)}$ the function $u^i(t) := S_t^i x$ satisfies \eqref{eq:gradientFlow} with $\phi = \phi^i.$
It appears from its proof that Theorem \ref{thm:trotter-formula} remains valid if we replace one or both of the resolvents with the associated semigroup.
\end{remark}

At first sight $(A_3)$ may seem difficult to verify in concrete situations. However, the next result provides some sufficient conditions for $(A_3)$ which are easier to state and which will be shown to be fulfilled in a number of examples in Section \ref{sec:applications}.

\begin{proposition}\label{prop:suff-for-A4}
Assume that $(A_1)$ and $(A_2)$ hold and suppose that $\phi^1$ and $\phi^2$ satisfy at least one of the following conditions:
 \ben
  \item
There exists $c \geq 0$ such that for any $h >0$ and $x \in \Dom(\phi)$ we have
  \begin{align*}
 \phi^1(J_h^2 x) &\leq \phi^1(x) + c h;
  \end{align*}
  \item
Assume that $\phi^1[X] \subseteq [0,\infty].$ Moreover, assume that there exists $\alpha \geq 0$ such that for any $h >0$ and $x \in \Dom(\phi)$ we have
  \begin{align*}
 \phi^1(J_h^2 x) &\leq e^{\alpha h}\phi^1(x);
  \end{align*}
 \item
Assume that $\phi^2[X] \subseteq [0,\infty].$ Moreover, assume that there exist $\alpha, c \geq 0$ such that for any $h >0$ and $x \in \Dom(\phi)$ we have
  \begin{align*}
 (i) \qquad \phi^1(J_h^2 x) &\leq \phi^1(x) + c h \phi^2(J_h^2 x),\\
 (ii) \qquad \phi^2(J_h^1 x) &\leq e^{\alpha h}\phi^2(x).
  \end{align*}
 \een
Then $(A_3)$ is satisfied with
\begin{align*}
 \textit{(1)}  \ \chi = constant,  \qquad
 \textit{(2)}  \ \chi = \phi^1,  \qquad
 \textit{(3)} \ \chi = \phi^2.
\end{align*}
\end{proposition}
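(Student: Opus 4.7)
The overall strategy is to treat each of the three cases separately by first deriving a pointwise bound on $\delta_{\h,x}^k = [\phi^1(x_\h^k) - \phi^1(\hx_\h^k)]^+$ from the hypothesis and then summing over $k$, using the identity $\sum_{k=1}^n h_k = t_\h^n/2 \leq T/2$ to convert the sum into a finite time-like integral. The auxiliary inputs I would rely on are the basic resolvent monotonicity $\phi^i(J_h^i y) \leq \phi^i(y)$ for $y \in \Dom(\phi^i)$ (obtained by setting $z = y$ in \eqref{eq:DEVI}), the elementary estimate $e^{\alpha h}-1 \leq \alpha h \, e^{\alpha h}$, and the observation that by $(A_2)$ every iterate $x_\h^k$ remains in $\Dom(\phi)$, so the hypotheses are applicable at each step.

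Case \textit{(1)} is essentially immediate: the hypothesis applied with $x = \hx_\h^k$ gives $\phi^1(x_\h^k) - \phi^1(\hx_\h^k) \leq c h_k$, hence $\delta_{\h,x}^k \leq c h_k$ and $\Delta_{\h,x}^n \leq c T/2$. Since the bound is independent of $x$, the choice $\chi \equiv$ constant is admissible.

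For cases \textit{(2)} and \textit{(3)} the plan is first to propagate a Gronwall-type estimate of the form $\phi^i(x_\h^k) \leq e^{\alpha t_\h^k/2}\phi^i(x)$ along the scheme (with $i = 1$ in case (2) and $i = 2$ in case (3)) by induction on $k$. The inductive step pairs the exponential hypothesis on one resolvent with the monotonicity bound on the other and telescopes via $t_\h^k/2 = t_\h^{k-1}/2 + h_k$. In case (2), non-negativity of $\phi^1$ turns the estimate $\phi^1(x_\h^k) - \phi^1(\hx_\h^k) \leq (e^{\alpha h_k}-1)\phi^1(\hx_\h^k)$ into a bound on $\delta_{\h,x}^k$ directly; summing with $e^{\alpha h_k}-1 \leq \alpha h_k e^{\alpha h_*}$ and $\phi^1(x) \leq U$ yields $\Delta_{\h,x}^n \leq \alpha e^{\alpha h_*} e^{\alpha T/2}(T/2)U$, so $\chi = \phi^1$ works. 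In case (3), hypothesis (3)(i) together with $\phi^2 \geq 0$ gives $\delta_{\h,x}^k \leq c h_k \phi^2(x_\h^k)$; inserting $\phi^2(x_\h^k) \leq e^{\alpha T/2}\phi^2(x) \leq e^{\alpha T/2}U$ and summing yields $\Delta_{\h,x}^n \leq c e^{\alpha T/2}(T/2)U$, so $\chi = \phi^2$ works.

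The only step requiring care is the bookkeeping in cases (2) and (3): one must correctly pair the exponential hypothesis on one operator with the monotonicity inequality on the other (in particular, in case (3) the exponential is used on the $J^1$-substep whereas the monotonicity is used on the $J^2$-substep, and conversely in case (2)), and keep track of the factor $1/2$ arising from the convention $t_\h^k = 2 \sum_{j\leq k} h_j$. No analytic input beyond the three listed hypotheses and the DEVI \eqref{eq:DEVI} is needed; the bound on $d^2(x,w) \leq R$ plays no role here, which simply means the resulting constant $K$ is independent of $R$.
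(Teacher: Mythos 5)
Your proposal is correct and follows essentially the same route as the paper: in each case you first derive a pointwise bound on $\delta_{\h,x}^k$, propagate a Gronwall-type exponential estimate along the scheme by pairing the hypothesis on one resolvent with the monotonicity $\phi^i(J_h^i\cdot)\leq\phi^i(\cdot)$ on the other, and then sum, using $\sum_k h_k=\tfrac12 t_\h^n\leq\tfrac12 T$. The only difference is cosmetic: in case (2) the paper exploits the exact telescoping identity $(e^{\alpha h_k}-1)e^{\frac12\alpha t_\h^{k-1}}=e^{\frac12\alpha t_\h^k}-e^{\frac12\alpha t_\h^{k-1}}$ to obtain the clean constant $(e^{\frac12\alpha T}-1)U$, and in case (3) it compares the sum with $\int_0^T e^{\alpha s}\,ds$; you instead use the cruder bounds $e^{\alpha h}-1\leq\alpha h e^{\alpha h}$ and $e^{\frac12\alpha t_\h^k}\leq e^{\frac12\alpha T}$, which yield slightly larger but equally admissible constants.
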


We apply our results to the case where $X = \calP_2(\R^d),$ the space of probability measures on $\R^d$ with finite second moment endowed with the $L^2$-Wasserstein metric. On this space we consider the sum of the (negative) Boltzmann entropy and a potential energy. The associated gradient flow corresponds to the Fokker-Planck equation \cite{JKO98}. We show that the conditions of Proposition \ref{prop:suff-for-A4}(1) are satisfied under suitable assumptions on the potential, and therefore the splitting method converges in this setting. Using (2) and (3) of Proposition \ref{prop:suff-for-A4}, we obtain similar results by replacing the Boltzmann entropy by the R\'{e}nyi entropy, which corresponds to the porous medium equation \cite{O01}.

The paper is organised as follows. In Section \ref{sec:trotterMetric} we shall work in the abstract setting of a metric space and give the proof of Theorem \ref{thm:trotter-formula} and Proposition \ref{prop:suff-for-A4}. The applications to gradient flows in the Wasserstein space are presented in Section \ref{sec:applications}.

\section{Proof of Theorem \ref{thm:trotter-formula} and of Proposition \ref{prop:suff-for-A4}}
\label{sec:trotterMetric}

We continue working in the setting of Section \ref{sec:intro}. In particular, we assume throughout this section (with the exception of the proof of Proposition \ref{prop:suff-for-A4}) that $(A_1),$ $(A_2)$ and $(A_3)$ hold.

We will adapt the arguments from \cite{AGS} where a single functional has been considered. First we state a simple analogue of \eqref{eq:DEVI}.

 \blem[Discrete Evolution Variational Inequality]
   \label{lem:crucialQuadRef}
Let $x \in \Dom(\phi)$ and let $\h := (h_k)_{k=1}^n$ be a discretisation.
For $w \in \Dom(\phi)$ and $k =1, \ldots, n$ we have
 \beq\bal \label{eq:crucialQuadRef}
\frac{1}{2h_k} \Big( d^2(x_\h^k, w) - d^2(x_\h^{k-1}, w) \Big)
   & \leq \phi(w)  - \phi(x_\h^k) - \frac{1}{4h_k}d^2(x_\h^k, x_\h^{k-1})
  + \delta_{\h,x}^k.
 \eal\eeq
In particular,
\begin{align} \label{eq:special-case}
\frac{3}{4h_k} d^2(x_\h^k, x_\h^{k-1})
   & \leq \phi(x_\h^{k-1}) - \phi(x_\h^k) + \delta_{\h,x}^k.
\end{align}
\elem

 \bpf
Recall that $\hx_\h^k = J_{h_k}^1 x_\h^{k-1}.$ Using \eqref{eq:DEVI} we find that
 \begin{align*} 
  \frac{1}{2h_k} \Big(  d^2(\hx_\h^k, w)
 		- d^2(x_\h^{k-1}, w) \Big)
   &\leq \phi^1(w) - \phi^1(\hx_\h^k)
  	 - \frac{1}{2h_k}d^2(\hx_\h^k, x_\h^{k-1}),
  \\ \frac{1}{2h_k} \Big(  d^2(x_\h^{k}, w)
 - d^2(\hx_\h^k, w) \Big)
   &\leq \phi^2(w) - \phi^2(x_\h^{k})
   	 - \frac{1}{2h_k}d^2(x_\h^{k}, \hx_\h^k).
 \end{align*}
Adding these inequalities, we obtain
\begin{align*}
  \frac{1}{2h_k} \Big(  d^2(x_\h^k, w) &- d^2(x_\h^{k-1}, w) \Big)
   \leq \phi(w) - \phi^2(x_\h^{k}) - \phi^1(x_\h^k)
 	  \\ & \qquad
			+ \phi^1(x_\h^k)	 - \phi^1(\hx_\h^k)
  	 - \frac{1}{2h_k} \Big(d^2(x_\h^{k-1}, w) 
                    + d^2(x_\h^{k}, \hx_\h^k) \Big).
\end{align*}
Finally, observe that
\begin{align*}
 d^2(x_\h^k, x_\h^{k-1})
 	\leq 2  d^2(x_\h^k, \hx_\h^k) + 2  d^2(\hx_\h^k, x_\h^{k-1}).
\end{align*}
 \epf

Our next goal is to prove some a priori estimates in Proposition \ref{prop:a-priori}. We will use the following discrete version of Gronwall's lemma, taken from \cite[Lemma 3.2.4]{AGS}. For the sake of completeness we include the proof.

 \begin{lemma}\label{lem:Gronwall}
Let $A \geq 0,$ and let $\{a_n\}_{n\geq 1}, \{\tau_n\}_{n\geq1}$ be sequences of positive numbers satisfying $m := \sup_{n \geq 1} \tau_n < 1$ and
\begin{align*}
 a_n \leq A + \sum_{k=1}^n \tau_k a_k, \qquad n \geq 1.
\end{align*}
Then, writing $\beta := \frac{1}{1-m},$ $t_0 := 0,$ and $t_n := \sum_{k=1}^n \tau_k$ for $n \geq 1,$ we have
\begin{align} \label{eq:Gronwall}
 a_n \leq A\beta \exp(\beta t_{n-1} ).
\end{align}
 \end{lemma}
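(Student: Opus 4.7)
The plan is to proceed by induction on $n$, after first absorbing the $k=n$ term on the right-hand side into the left. Since $\tau_n \leq m < 1$, the hypothesis can be rewritten as
\begin{equation*}
(1-\tau_n)a_n \leq A + \sum_{k=1}^{n-1} \tau_k a_k,
\end{equation*}
and dividing by $1-\tau_n \geq 1-m = 1/\beta > 0$ yields
\begin{equation*}
a_n \leq A\beta + \beta \sum_{k=1}^{n-1} \tau_k a_k. \tag{$\ast$}
\end{equation*}
This form is what makes the induction run cleanly; I would consider this the one non-obvious step, though it is essentially just absorbing the diagonal term.

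Next I would verify the base case $n=1$: since there is no sum on the right of $(\ast)$, we get $a_1 \leq A\beta = A\beta\exp(\beta t_0)$, matching \eqref{eq:Gronwall}. For the inductive step, assume \eqref{eq:Gronwall} holds for all indices $<n$. Plugging into $(\ast)$ gives
\begin{equation*}
a_n \leq A\beta + \beta\sum_{k=1}^{n-1}\tau_k \cdot A\beta \exp(\beta t_{k-1})
    = A\beta\left(1 + \sum_{k=1}^{n-1} \beta\tau_k \exp(\beta t_{k-1})\right).
\end{equation*}

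The final ingredient is a telescoping comparison using the elementary inequality $e^x - 1 \geq x$ for $x \geq 0$. Applied with $x = \beta\tau_k$, this gives
\begin{equation*}
\exp(\beta t_k) - \exp(\beta t_{k-1}) = \exp(\beta t_{k-1})\bigl(\exp(\beta\tau_k) - 1\bigr) \geq \beta\tau_k \exp(\beta t_{k-1}),
\end{equation*}
so summing from $k=1$ to $n-1$ telescopes to
\begin{equation*}
\sum_{k=1}^{n-1} \beta\tau_k \exp(\beta t_{k-1}) \leq \exp(\beta t_{n-1}) - 1.
\end{equation*}
Substituting this bound back yields $a_n \leq A\beta\exp(\beta t_{n-1})$, which closes the induction. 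The potential obstacle is mainly bookkeeping: keeping the shift $t_{k-1}$ versus $t_k$ straight so that the telescoping in fact ends at $t_{n-1}$, which is precisely what \eqref{eq:Gronwall} asserts.
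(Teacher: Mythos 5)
Your proof is correct and follows essentially the same route as the paper: induction, absorbing the diagonal term $\tau_n a_n$ to get $a_n \leq A\beta + \beta\sum_{k<n}\tau_k a_k$, and then a telescoping bound on $\sum_k \beta\tau_k e^{\beta t_{k-1}}$. The only cosmetic difference is that you obtain the telescoping estimate from the elementary inequality $e^x \geq 1+x$, whereas the paper compares $\tau_k e^{\beta t_{k-1}}$ to the integral $\int_{t_{k-1}}^{t_k} e^{\beta t}\,dt$ — two ways of saying the same thing.
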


 \begin{proof}
We argue by induction and observe that \eqref{eq:Gronwall} clearly holds for $n=1$.

Let $n \geq 1$ and suppose that \eqref{eq:Gronwall} holds for all $1 \leq k \leq n.$ Since, for any $n \geq 1,$
\begin{align*}
 a_n \leq \frac{A}{1-\tau_n} + \frac{1}{1-\tau_n}\sum_{j=1}^{n-1} \tau_j a_j,
\end{align*}
we obtain
\begin{align*}
 a_{n+1}
   & \leq A\beta + \beta\sum_{j=1}^n \tau_j a_j
     \leq A\beta + A\beta^2\sum_{j=1}^n \tau_j e^{\beta t_{j-1}}
 \\& \leq A\beta + A\beta^2\sum_{j=1}^n
        \int_{t_{j-1}}^{t_j} e^{\beta t} \,dt
       = A\beta + A\beta^2 \int_0^{t_n} e^{\beta t} \,dt
 \\&    = A\beta e^{\beta t_n},
\end{align*}
which completes the induction step.
 \end{proof}

 \begin{proposition}[A priori estimates] \label{prop:a-priori}
Let  $w \in \ov{\Dom(\phi)}$ and $\thh, K, R, S, T > 0$ be given. There exist constants $C, \widetilde C  \in (0,\infty)$ such that for every $x \in \Dom(\phi)$ and any discretisation $\h := (h_k)_{k=1}^n$ satisfying
\begin{align}\label{eq:RST}
 |\h|\leq \frac18 \thh, \qquad
       \Delta_{\h,x}^n \leq K, \qquad
 d^2(x, w) \leq R, \qquad
   \phi(x) \leq S, \qquad
 t_\h^n \leq T,
\end{align}
we have
\begin{align} \label{eq:a-priori-one}
 d^2( x_\h^n, w) &\leq C,\\  \label{eq:a-priori-two}
 \frac34 \sum_{k=1}^n \frac{1}{h_k} d^2( x_\h^k, x_\h^{k-1})
  \leq \phi(x_\h^0) - \phi(x_\h^n) + \Delta_{\h,x}^n
  & \leq \widetilde C.
\end{align}
 \end{proposition}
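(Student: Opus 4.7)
The strategy is to iterate Lemma \ref{lem:crucialQuadRef} (the discrete EVI) against a fixed reference point $\tilde w \in \Dom(\phi)$ and then close the resulting recursion with the discrete Gronwall Lemma \ref{lem:Gronwall}. Such a $\tilde w$ exists because $\Dom(\phi) \neq \emptyset$. The only genuine obstacle is that $\phi$ is not assumed bounded below, so after telescoping \eqref{eq:crucialQuadRef} one cannot immediately extract a useful estimate on $\sum h_k \phi(x_\h^k)$. A quadratic lower bound on $\phi$ in terms of $d^2(\cdot, \tilde w)$ must first be extracted from $(A_1)$ itself; this is the main point of the argument.

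For this preliminary step, fix $\tilde w \in \Dom(\phi)$ and set $h_0 := \thh$. By $(A_1)$ the resolvents $y_i := J_{h_0}^i \tilde w \in \Dom(\phi^i)$ exist, and since each $y_i$ minimises $\Phi^i(h_0, \tilde w; \cdot)$ globally we get $\phi^i(y) \geq \phi^i(y_i) - \tfrac{1}{2h_0} d^2(y, \tilde w)$ for every $y \in \Dom(\phi^i)$. Adding the two inequalities produces a constant $\Lambda = \Lambda(\tilde w, \thh) \in \R$ such that
\[
 \phi(y) \geq -\Lambda - \mu\, d^2(y, \tilde w) \quad \text{for all } y \in \Dom(\phi), \qquad \mu := 1/h_0.
\]
The hypothesis $|\h| \leq \thh/8$ now implies $\tau_k := 2\mu h_k \leq 1/4 < 1$ for every $k$, the smallness condition required by Lemma \ref{lem:Gronwall}.

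Next, I would apply \eqref{eq:crucialQuadRef} with $w$ replaced by $\tilde w$, multiply through by $2h_k$, discard the non-positive term $-\tfrac{1}{2} d^2(x_\h^k, x_\h^{k-1})$, and insert the lower bound on $\phi$ just established. Summing over $k = 1, \dots, j$ and using $\sum_k h_k = t_\h^j/2 \leq T/2$ together with $\sum_k h_k \delta_{\h,x}^k \leq |\h| \Delta_{\h,x}^n \leq |\h| K$ produces an inequality of the form $a_j \leq A + \sum_{k=1}^j \tau_k a_k$ with $a_k := d^2(x_\h^k, \tilde w)$ and $A := a_0 + T(\phi(\tilde w) + \Lambda) + 2|\h| K$. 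Since $a_0 \leq 2 d^2(x, w) + 2 d^2(w, \tilde w) \leq 2R + 2 d^2(w, \tilde w)$, the constant $A$ depends only on the data. Lemma \ref{lem:Gronwall} now bounds $a_n$ by a constant, and $d^2(x_\h^n, w) \leq 2 a_n + 2 d^2(w, \tilde w)$ yields \eqref{eq:a-priori-one}.

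Finally, \eqref{eq:a-priori-two} splits into two parts. The first inequality is a direct telescoping of \eqref{eq:special-case} combined with $\phi(x_\h^0) \leq S$ and $\Delta_{\h,x}^n \leq K$. The upper bound by $\widetilde C$ then follows by invoking the lower bound of the preliminary step once more -- namely $-\phi(x_\h^n) \leq \Lambda + \mu a_n$ -- and using that $a_n$ has already been controlled. The hardest part is really this initial extraction of a quadratic lower bound on $\phi$ from the mere existence of resolvents; once this is in place, the remainder is a routine application of discrete Gronwall.
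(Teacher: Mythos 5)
Your proof is correct, and it takes a noticeably different route from the one in the paper, although both rest on the same underlying idea (a quadratic lower bound on $\phi$ extracted from the resolvents). You fix a reference point $\tilde w\in\Dom(\phi)$, telescope the discrete EVI \eqref{eq:crucialQuadRef} against $\tilde w$ (legitimate since $\phi(\tilde w)<\infty$), drop the non-positive term $-\tfrac12 d^2(x_\h^k,x_\h^{k-1})$, and feed the quadratic lower bound $\phi\geq -\Lambda-\mu d^2(\cdot,\tilde w)$ into the sum to land directly on a Gronwall recursion for $a_k=d^2(x_\h^k,\tilde w)$. The paper instead keeps the reference point $w\in\ov{\Dom(\phi)}$ (against which \eqref{eq:crucialQuadRef} cannot be iterated, since $\phi(w)$ may be $+\infty$), expands $d^2(x_\h^n,w)-d^2(x_\h^0,w)$ by difference of squares and Young's inequality with a free parameter $\eps$, absorbs the resulting $\sum\eps h_k^{-1}d^2(x_\h^k,x_\h^{k-1})$ via the already-proved first inequality of \eqref{eq:a-priori-two}, and encodes the lower bound on $\phi$ through the auxiliary quantity $\hat\phi_h(w)$. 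Your route is arguably more streamlined: it bypasses the Young-inequality expansion, and it does not use the hypothesis $\phi(x)\leq S$ to prove \eqref{eq:a-priori-one} (that hypothesis is only needed for the second inequality of \eqref{eq:a-priori-two}), whereas the paper uses $\phi(x_\h^0)\leq S$ in the Gronwall step. The price is the extra triangle-inequality step $d^2(x_\h^n,w)\leq 2a_n+2d^2(w,\tilde w)$, but since $\tilde w$ is a fixed element of $\Dom(\phi)$ that quantity is a fixed constant, so the constants you produce still depend only on the given data. One small observation: with $y_i:=J^i_{h_0}\tilde w$ one in fact has $\phi^i(y_i)\leq\phi^i(\tilde w)$, so $\phi(\tilde w)+\Lambda\geq 0$ automatically, which justifies bounding $t_\h^j(\phi(\tilde w)+\Lambda)\leq T(\phi(\tilde w)+\Lambda)$ without a positive-part truncation; it is worth stating this so the constant $A$ in the Gronwall lemma is visibly non-negative.
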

 
Notice that, by applying the first inequality of \eqref{eq:a-priori-two} to the subdiscretisation $(h_j)_{j=1}^k$, $k = 1, \ldots, n,$ we have
\begin{align} \label{eq:deelpartitie}
 \phi(x_\h^k) 
  \leq \phi(x_\h^0) + \Delta_{\h,x}^k
  \leq \phi(x_\h^0) + \Delta_{\h,x}^n
\end{align}
for any $k =1, \ldots, n.$

 \begin{proof}
It follows from \eqref{eq:special-case} that, for $k =1, \ldots, n,$
\begin{align*}
 \frac{3}{4 h_k} d^2(x_\h^k, x_\h^{k-1})
    \leq \phi(x_\h^{k-1}) - \phi(x_\h^k) + \delta_{\h,x}^k.
\end{align*}
Summation over $k$ yields the first inequality in \eqref{eq:a-priori-two}.

For $\eps > 0$ we obtain, setting $h_0 = h_{n+1} = 0,$
\begin{align*}
   d^2&(x_\h^n, w) - d^2(x_\h^0, w)
\\& = \sum_{k=1}^n  d^2(x_\h^k, w) - d^2(x_\h^{k-1}, w)
\\& = \sum_{k=1}^n \big(d(x_\h^k, w) - d(x_\h^{k-1}, w) \big)
                            \big(d(x_\h^k, w) + d(x_\h^{k-1}, w) \big)
\\& \leq \sum_{k=1}^n d(x_\h^k, x_\h^{k-1})
                            \big(d(x_\h^k, w) + d(x_\h^{k-1}, w) \big)
\\& \leq \sum_{k=1}^n \frac{\eps}{h_k}d^2(x_\h^k, x_\h^{k-1})
       + \sum_{k=1}^n \frac{h_k}{4\eps}
                            \big(d(x_\h^k, w) + d(x_\h^{k-1}, w) \big)^2
\\& \leq \sum_{k=1}^n \frac{\eps}{h_k}d^2(x_\h^k, x_\h^{k-1})
       + \sum_{k=1}^n \frac{h_k}{2\eps}
                            \big(d^2(x_\h^k, w) + d^2(x_\h^{k-1}, w) \big)
\\&    = \sum_{k=1}^n \frac{\eps}{h_k}d^2(x_\h^k, x_\h^{k-1})
       + \sum_{k=0}^n\frac{ h_k + h_{k+1}}{2\eps} d^2(x_\h^k, w).
\end{align*}
Combining this estimate with the first inequality in \eqref{eq:a-priori-two} (which we already proved), we arrive at
\beq\bal \label{eq:new}
 \frac34 ( d^2(x_\h^n, w) - d^2(x_\h^0, w) )
  & \leq \eps ( \phi(x_\h^0) - \phi(x_\h^n) + \Delta_{\h,x}^n )
  \\&\quad   + \frac34 \sum_{k=0}^n \frac{ h_k + h_{k+1} }{2\eps} d^2(x_\h^k, w).
\eal\eeq
For $i =1,2,$ $h > 0,$ and $z \in X$ we set
\begin{align*}
 \Phi^i(h,z;y) :=
    \left\{ \begin{array}{ll}
 \phi^i(y) + \frac{1}{2h} d^2(z,y),
 & \text{$y \in \Dom(\phi^i)$},\\
 + \infty,
 & \text{otherwise,}
 \end{array} \right.
\end{align*}
and
\begin{align*}
  \hat \phi_h(w) :=
     \Phi^1(h,w;J_h^1 w) + \Phi^2(h,w;J_h^2 w).
\end{align*}
The defining property of $J_h^i w$ implies that
\begin{align*}
 \Phi^i(h,w; J_h^i w) \leq \Phi^i(h,w; x_\h^n).
\end{align*}
Adding these inequalities for  $i = 1,2,$ it follows that
\begin{align} \label{eq:new2}
\hat\phi_{h}(w) \leq \phi(x_\h^n) + \frac{1}{h}d^2(x_\h^n,
w).
\end{align}
Substituting  $\eps := \frac{\thh}{2}$ in \eqref{eq:new}, and using
\eqref{eq:RST} and \eqref{eq:new2}, we obtain
\begin{align*}
  \frac34 \big( d^2(x_\h^n, w) - d^2(x_\h^0, w) \big)
  &\leq  \frac{\thh}{2}\big( S - \hat\phi_{\thh}(w)
       +  \frac{1}{\thh}d^2(x_\h^n, w)
       + \Delta_{\h,x}^n \big)
   \\&  \quad  + \frac34 \sum_{k=0}^n
        \frac{ h_k + h_{k+1} }{\thh} d^2(x_\h^k, w).
\end{align*}
Rearranging terms, using \eqref{eq:RST}, and multiplying the
inequality by $4,$ yields
\begin{align*}
   d^2(x_\h^n, w)
     &\leq A
       +  \sum_{k=0}^n \tau_k d^2(x_\h^k, w),
\end{align*}
where $A := [3 R + 2 \thh ( S - \hat\phi_{\thh}(w) + K)]^+$ and $\tau_k := 3 \frac{ h_k + h_{k+1} }{\thh}
\leq \frac34.$ Applying Lemma \ref{lem:Gronwall} we obtain
\begin{align*}
 d^2(x_\h^n, w)
  &\leq 4A \exp\Big( 12 \sum_{k=1}^{n-1} \frac{h_k + h_{k+1}}{\thh}\Big)
  \leq 4A \exp\Big(\frac{24}{\thh}\sum_{k=1}^n h_k\Big)
  \\&\leq 4A \exp \Big( \frac{12T}{\thh} \Big)
     =: C,
\end{align*}
which proves \eqref{eq:a-priori-one}.

Finally, using \eqref{eq:RST}, \eqref{eq:a-priori-one}, and \eqref{eq:new2}, we
obtain
\begin{align*}
 \phi(x_\h^0) - \phi(x_\h^n)  + \Delta_{\h,x}^n
 & \leq S - \hat\phi_{\thh}(w) + \frac{1}{\thh}d^2(x_\h^n, w)
         + K
 \\& \leq S - \hat\phi_{\thh}(w) + \frac{C}{\thh} + K
      =: \widetilde C.
\end{align*}
which proves the second inequality in
\eqref{eq:a-priori-two}.
 \end{proof}

Let $x \in \Dom(\phi)$ and a discretisation $\h :=  (h_k)_{k=1}^n \subseteq (0,\infty)$  be given. As in \cite{AGS} it will be useful to consider continuous interpolants of some relevant quantities which are originally defined only on the discrete set $(t_{\h}^j)_{j=0}^n.$ For this purpose we will use the (unique) function $\ell_\h : [0,t_\h^n] \to [0,1]$ which is affine on each interval $[t_{j-1}, t_j),$ $j = 1, \ldots, n,$ and
satisfies $\ell_\h(t_\h^j) = 0$ for $j = 0, \ldots, n,$ and $\lim_{t \uparrow t_\h^j} \ell_\h(t) =1$ for $j = 1, \ldots, n.$
We consider the function $d_{\h,x} : [0,t_\h^n] \times X \to \R$
defined by
\begin{align*}
 d_{\h,x}^2(t,y) :=
 \big(1- \ell_\h(t)\big) d^2(\ux_\h(t), y)
                 + \ell_\h(t) d^2(\ox_\h(t), y)
\end{align*}
and the function $\phi_{\h,x} : [0,t_\h^n] \to \R$
defined by
\begin{align*}
 \phi_{\h,x}(t) :=
             \big( 1- \ell_\h(t) \big) \phi( \ux_\h(t) )
                 + \ell_\h(t) \phi( \ox_\h(t) ).
\end{align*}
Note that $\ux_\h(t), \ox_\h(t) \in \Dom(\phi),$ since $K_h$ maps $\Dom(\phi)$ into itself, as has already been observed before.
Finally, we define the function $R_{\h,x} : [0,t_\h^n] \to \R$ by
\begin{align*}
 R_{\h,x}(t) &:= \sum_{k=1}^n \one_{[t_\h^{k-1}, t_\h^k)}(t)
              \bigg((1- \ell_\h(t))
                 \Big(\phi(x_\h^{k-1})
                 - \phi(x_\h^k) + \delta_{\h,x}^k
  \\ &  \qquad        - \frac{1}{4h_k}d^2(x_\h^k, x_\h^{k-1})\Big)
      + \ell_\h(t)
              \Big( \delta_{\h,x}^k - \frac{1}{4h_k}d^2(x_\h^k, x_\h^{k-1})\Big)\bigg).
\end{align*}

The following result is an analogue of \cite[Theorem 4.1.4]{AGS}.

 \begin{lemma}[Gradient flow approximation] \label{lem:Rbound}
Let $x \in \Dom(\phi)$ and a discretisation $\h :=  (h_k)_{k=1}^n$ be given. For every $y \in \Dom(\phi)$ and every $t \in [0,t_\h^n] \setminus \{ t_\h^0, \ldots, t_\h^n\}$ we have
\begin{align} \label{eq:Rbound}
 \frac{d}{d t} d_{\h,x}^2(t,y) + \phi_{\h,x}(t) - \phi(y)
   \leq R_{\h,x}(t),
\end{align}
where $\frac{d}{dt}$ denotes the pointwise derivative.
 \end{lemma}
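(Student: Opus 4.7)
The plan is to unpack the definitions on each open subinterval $(t_\h^{k-1}, t_\h^k)$ and verify \eqref{eq:Rbound} by a direct one-step calculation built on the discrete EVI of Lemma \ref{lem:crucialQuadRef}. First I would observe that, by the right/left-continuity conventions, the interpolants satisfy $\ux_\h(t) = x_\h^{k-1}$ and $\ox_\h(t) = x_\h^k$ on $(t_\h^{k-1}, t_\h^k)$, while $\ell_\h$ is affine there with slope $\ell_\h'(t) = \frac{1}{2 h_k}$ since $t_\h^k - t_\h^{k-1} = 2 h_k$. Because the quantities $d^2(x_\h^{k-1}, y)$ and $d^2(x_\h^k, y)$ are constants in $t$ on this interval, differentiating $d_{\h,x}^2(\cdot,y)$ yields
\begin{align*}
\frac{d}{dt} d_{\h,x}^2(t, y) = \frac{1}{2h_k}\bigl( d^2(x_\h^k, y) - d^2(x_\h^{k-1}, y) \bigr).
\end{align*}

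Next I would apply inequality \eqref{eq:crucialQuadRef} of Lemma \ref{lem:crucialQuadRef} with $w := y$, which bounds the right-hand side above by $\phi(y) - \phi(x_\h^k) + \delta_{\h,x}^k - \frac{1}{4 h_k} d^2(x_\h^k, x_\h^{k-1})$. Adding $\phi_{\h,x}(t) - \phi(y) = (1-\ell_\h(t))\phi(x_\h^{k-1}) + \ell_\h(t)\phi(x_\h^k) - \phi(y)$ to both sides cancels the $\phi(y)$ contributions, and combining $-\phi(x_\h^k) + \ell_\h(t)\phi(x_\h^k) = -(1-\ell_\h(t))\phi(x_\h^k)$ gives
\begin{align*}
\frac{d}{dt} d_{\h,x}^2(t,y) + \phi_{\h,x}(t) - \phi(y) \leq (1-\ell_\h(t))\bigl( \phi(x_\h^{k-1}) - \phi(x_\h^k) \bigr) + \delta_{\h,x}^k - \frac{1}{4h_k} d^2(x_\h^k, x_\h^{k-1}).
\end{align*}
Writing the last two terms as the trivial convex combination $(1-\ell_\h(t)) + \ell_\h(t) = 1$ times $\delta_{\h,x}^k - \frac{1}{4h_k} d^2(x_\h^k, x_\h^{k-1})$ and regrouping gives exactly $R_{\h,x}(t)$ as defined on $[t_\h^{k-1}, t_\h^k)$, which is the required inequality.

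There is no genuine obstacle here: once the discrete EVI is in hand the result is purely a bookkeeping exercise matching the one-step bound to the specific convex combination built into $R_{\h,x}$. The only points requiring care are the correct identification of $\ux_\h$ and $\ox_\h$ on the open intervals (from the right/left-continuity), the value of the slope $\ell_\h' = \frac{1}{2h_k}$ that comes from the factor $2$ in $t_\h^k = 2\sum_{j=1}^k h_j$, and the exclusion of the mesh points $t_\h^0, \ldots, t_\h^n$ where the pointwise derivative need not exist — which is precisely the hypothesis in the lemma.
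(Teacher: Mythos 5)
Your proof is correct and follows the same route as the paper: on each open subinterval $(t_\h^{k-1}, t_\h^k)$ you compute $\frac{d}{dt} d_{\h,x}^2(t,y) = \frac{1}{2h_k}\big(d^2(x_\h^k,y) - d^2(x_\h^{k-1},y)\big)$, apply the discrete EVI \eqref{eq:crucialQuadRef} with $w=y$, and regroup the convex combination to identify $R_{\h,x}(t)$. The only difference is that you spell out the bookkeeping (the identification of $\ux_\h$, $\ox_\h$ on the open interval and the slope $\ell_\h' = \frac{1}{2h_k}$) which the paper leaves implicit.
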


 \begin{proof}
First we remark that $t \mapsto d_{\h,x}^2(t, y)$ is a piecewise affine function. As a consequence, the derivative in \eqref{eq:Rbound} exists for $t$ in the above-mentioned set.
For $k = 1, \ldots, n$ and $t \in (t_\h^{k-1}, t_\h^k)$ we obtain using \eqref{eq:crucialQuadRef},
 \begin{align*}
 \frac{d}{dt} &d_{\h,x}^2(t,y) + \phi_{\h,x}(t) - \phi(y)
  \\&= \frac{1}{2 h_k} (d^2(x_\h^k,y) - d^2(x_\h^{k-1},y))
       + \phi_{\h,x}(t) - \phi(y)
  \\&\leq \delta_{\h,x}^k - \frac{1}{4 h_k} d^2(x_\h^k, x_\h^{k-1})
       + \phi_{\h,x}(t) - \phi(x_\h^k)
  \\&   = \delta_{\h,x}^k - \frac{1}{4 h_k} d^2(x_\h^k, x_\h^{k-1})
       + (1- \ell_\h(t)) (\phi(x_\h^{k-1})- \phi(x_\h^k))
  \\& = R_{\h,x}(t).
\end{align*}
 \end{proof}

The following estimate will be useful in the proof of Proposition \ref{prop:trotter-est} below.

 \begin{lemma} \label{lem:R-estimate}
Let $x\in \Dom(\phi)$ and a discretisation $\h := (h_k)_{k=1}^n$ be given. For $1 \leq k \leq n,$
\begin{align*}
 \int_0^{t_\h^k} [R_{\h,x}(s)]^+ \, ds
     \leq  |\h|\big(\phi(x_\h^0) - \phi(x_\h^k)
          + 2 \Delta_{\h,x}^k \big).
\end{align*}
 \end{lemma}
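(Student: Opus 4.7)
The plan is to work interval by interval. On each subinterval $[t_\h^{j-1}, t_\h^j)$, the function $R_{\h,x}$ is a convex combination, with weights $1 - \ell_\h(t)$ and $\ell_\h(t)$, of the constants
\begin{align*}
A_j &:= \phi(x_\h^{j-1}) - \phi(x_\h^j) + \delta_{\h,x}^j - \tfrac{1}{4h_j} d^2(x_\h^j, x_\h^{j-1}),\\
B_j &:= \delta_{\h,x}^j - \tfrac{1}{4h_j} d^2(x_\h^j, x_\h^{j-1}).
\end{align*}
Since $t_\h^j - t_\h^{j-1} = 2 h_j$ and $\ell_\h$ interpolates affinely between $0$ and $1$ on this interval, $\int_{t_\h^{j-1}}^{t_\h^j}(1-\ell_\h) = \int_{t_\h^{j-1}}^{t_\h^j} \ell_\h = h_j$. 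Using the convexity of $[\,\cdot\,]^+$, we obtain
\begin{align*}
\int_{t_\h^{j-1}}^{t_\h^j} [R_{\h,x}(s)]^+\, ds \leq h_j ([A_j]^+ + [B_j]^+).
\end{align*}

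Next, I would simplify the two positive parts. Since $\frac{1}{4h_j} d^2(x_\h^j,x_\h^{j-1}) \geq 0$, discarding this negative contribution can only make things larger, so $[A_j]^+ \leq [\phi(x_\h^{j-1}) - \phi(x_\h^j) + \delta_{\h,x}^j]^+$ and $[B_j]^+ \leq [\delta_{\h,x}^j]^+ = \delta_{\h,x}^j$. But the \textit{key observation} is that inequality \eqref{eq:special-case} in Lemma \ref{lem:crucialQuadRef} says precisely
\begin{align*}
\phi(x_\h^{j-1}) - \phi(x_\h^j) + \delta_{\h,x}^j \geq \tfrac{3}{4 h_j} d^2(x_\h^j, x_\h^{j-1}) \geq 0,
\end{align*}
so the positive part equals the quantity itself. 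Combining these bounds yields the one-interval estimate
\begin{align*}
\int_{t_\h^{j-1}}^{t_\h^j} [R_{\h,x}(s)]^+\, ds \leq h_j \big(\phi(x_\h^{j-1}) - \phi(x_\h^j)\big) + 2 h_j \delta_{\h,x}^j.
\end{align*}

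Finally I would sum over $j = 1, \ldots, k$. The only subtle point — and the step that needs care — is that we cannot replace $h_j$ by $|\h|$ directly in the telescoping term $\sum_j h_j(\phi(x_\h^{j-1}) - \phi(x_\h^j))$, because the individual increments $\phi(x_\h^{j-1}) - \phi(x_\h^j)$ may be negative (the functional $\phi$ need not be monotone along the scheme). The fix is again to add and subtract $\delta_{\h,x}^j$, writing
\begin{align*}
h_j\big(\phi(x_\h^{j-1}) - \phi(x_\h^j)\big) = h_j\big(\phi(x_\h^{j-1}) - \phi(x_\h^j) + \delta_{\h,x}^j\big) - h_j \delta_{\h,x}^j,
\end{align*}
and then applying $h_j \leq |\h|$ only to the first summand, which is now nonnegative by \eqref{eq:special-case}. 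Summing, telescoping the $\phi$-differences, and using $\sum_{j=1}^k h_j \delta_{\h,x}^j \leq |\h| \Delta_{\h,x}^k$ to absorb the remaining terms delivers the desired bound $|\h|\big(\phi(x_\h^0) - \phi(x_\h^k) + 2 \Delta_{\h,x}^k\big)$. I do not foresee any real obstacle; the argument is essentially bookkeeping, with the sign-sensitive regrouping being the one place that requires attention.
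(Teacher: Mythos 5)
Your proof is correct and takes essentially the same route as the paper's: you drop the nonpositive $-\tfrac{1}{4h_j}d^2$ term, use \eqref{eq:special-case} to recognize that $\phi(x_\h^{j-1}) - \phi(x_\h^j) + \delta_{\h,x}^j$ is nonnegative, integrate $\ell_\h$ and $1-\ell_\h$ over each subinterval to get $h_j$, and then bound $h_j$ by $|\h|$ only against nonnegative quantities before telescoping. Your explicit invocation of the convexity of $[\,\cdot\,]^+$ and the careful commentary on why $h_j\le|\h|$ cannot be applied to the raw telescoping term are just more verbose renderings of steps the paper carries out implicitly; the underlying estimate is identical.
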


 \begin{proof}
For $i = 1, \ldots, k$, we have as a consequence of \eqref{eq:special-case},
\begin{align} \label{eq:nonneg}
  \phi(x_\h^{i-1}) - \phi(x_\h^i) + \delta_{\h,x}^i
      - \frac{1}{4 h_i} d^2(x_\h^i , x_\h^{i-1})
        \geq \frac{1}{2 h_i} d^2(x_\h^i, x_\h^{i-1}) \geq 0,
\end{align}
and therefore, for $s \in [t_\h^{i-1}, t_\h^i)$ we obtain
\beq\bal \label{eq:Rplus}
  [R_{\h,x}(s)]^+ & \leq (1- \ell_\h(t))
    \Big(   \phi(x_\h^{i-1}) - \phi(x_\h^i) + \delta_{\h,x}^i
     \Big)
  + \ell_\h(t)  \delta_{\h,x}^i.
\eal\eeq
Observe that $\delta_{\h,x}^k$ and  $\phi(x_\h^{i-1}) - \phi(x_\h^i) +  \delta_{\h,x}^i$ are non-negative, as follows from \eqref{eq:nonneg}.
Combining this with \eqref{eq:Rplus} and the identities 
\begin{align*}
 \int_{t_\h^{i-1}}^{t_\h^i} \ell_\h(t) \, dt =  \int_{t_\h^{i-1}}^{t_\h^i} 1- \ell_\h(t) \, dt = h_i,
\end{align*}
we obtain
\begin{align*}
 \int_0^{t_\h^k} [R_{\h,x}(s)]^+ \, ds
  & = \sum_{i=1}^k
  \int_{t_\h^{i-1}}^{t_\h^i}  [R_{\h,x}(s)]^+ \, ds
 \\&  \leq \sum_{i=1}^k
   h_i \big( \phi(x_\h^{i-1}) - \phi(x_\h^i) +  \delta_{\h,x}^i \big)
 + h_i \delta_{\h,x}^i
 \\&  \leq  |\h| \sum_{i=1}^k
   \big( \phi(x_\h^{i-1}) - \phi(x_\h^i) +  \delta_{\h,x}^i \big)
 + |\h| \delta_{\h,x}^i
 \\& \leq |\h|\big(\phi(x_\h^0) - \phi(x_\h^k)
          + 2 \Delta_{\h,x}^k \big).
\end{align*}

 \end{proof}

We will now compare the discrete scheme induced by $(\h,x)$ to another discrete scheme induced by $(\r,y),$ where $y \in \Dom(\phi)$ and $\r = (r_j)_{j=1}^m$ is a discretisation.
For this purpose we consider the continuous function $d_{\h\r}^2: [0,t_\h^n] \times [0,t_\r^m] \to \R$ defined by
\begin{align*}
 d_{\h\r}^2(t,s) := (1-\ell_\r(s)) d_{\h,x}^2(t,\uy_\r(s))
                    + \ell_\r(s) d_{\h,x}^2(t, \oy_\r(s)).
\end{align*}
In this formula the dependence of $ d_{\h\r}^2(t,s)$ on $x$ and $y$ is suppressed in the notation. With this notation we have the following result:

 \begin{corollary} \label{cor:d-est}
For all $t \in [0,\min\{ t_\h^n, t_\r^m \}]$ we have
\begin{align*}
d_{\h\r}^2(t,t) \leq d^2(x_\h^0,y_\r^0)
  + \int_0^t R_{\h,x}(s) + R_{\r,y}(s) \, ds.
\end{align*}
 \end{corollary}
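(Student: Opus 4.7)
The idea is to view $D(t) := d_{\h\r}^2(t,t)$ as the restriction to the diagonal of a function that is bilinear in $(\ell_\h(t),\ell_\r(s))$ on each rectangular cell of the product grid, and to bound its derivative almost everywhere by applying Lemma \ref{lem:Rbound} separately in each variable. The step I expect to do the real work is a cancellation: the $\phi_{\h,x}$ and $\phi_{\r,y}$ terms produced by the two applications will appear with opposite signs and hence vanish on the diagonal $s=t$, leaving exactly the $R$-terms claimed in the statement.

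Fix $t$ outside the finite exceptional set $G := \{t_\h^k\}_{k=0}^n \cup \{t_\r^j\}_{j=0}^m$, and let $k, j$ be the indices with $t \in (t_\h^{k-1},t_\h^k)$ and $s \in (t_\r^{j-1},t_\r^j)$. Since $\uy_\r(s) = y_\r^{j-1}$ and $\oy_\r(s) = y_\r^j$ both lie in $\Dom(\phi)$, applying Lemma \ref{lem:Rbound} at each of these points (with $y = y_\r^{j-1}$ and $y = y_\r^j$) and taking the convex combination with weights $1-\ell_\r(s)$ and $\ell_\r(s)$ yields
\[
 \partial_t d_{\h\r}^2(t,s) \leq R_{\h,x}(t) + \phi_{\r,y}(s) - \phi_{\h,x}(t).
\]
The symmetry $d(a,b) = d(b,a)$, together with a direct expansion of the four bilinear coefficients on the rectangular cell, shows that
\[
 d_{\h\r}^2(t,s) = (1-\ell_\h(t))\, d_{\r,y}^2(s, \ux_\h(t)) + \ell_\h(t)\, d_{\r,y}^2(s, \ox_\h(t)),
\]
so the identical argument applied in the $s$-variable delivers
\[
 \partial_s d_{\h\r}^2(t,s) \leq R_{\r,y}(s) + \phi_{\h,x}(t) - \phi_{\r,y}(s).
\]

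Adding these two inequalities, the $\phi$-terms cancel exactly, and restricting to the diagonal $s=t$ and invoking the chain rule gives $D'(t) \leq R_{\h,x}(t) + R_{\r,y}(t)$ for every $t \in [0,\min\{t_\h^n,t_\r^m\}] \setminus G$. Since $D$ is continuous and piecewise quadratic on $[0,\min\{t_\h^n,t_\r^m\}]$, it is absolutely continuous, so integrating from $0$ to $t$ and noting that $D(0) = d_{\h\r}^2(0,0) = d^2(x_\h^0, y_\r^0)$ (as $\ell_\h(0) = \ell_\r(0) = 0$ and $\ux_\h(0) = x_\h^0$, $\uy_\r(0) = y_\r^0$) yields the claimed estimate. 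The only mild technicality is the finite exceptional set $G$, which is harmless because $D$ is piecewise polynomial and hence absolutely continuous.
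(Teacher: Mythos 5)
Your proof is correct and follows essentially the same route as the paper: apply Lemma \ref{lem:Rbound} to $t \mapsto d_{\h\r}^2(t,s)$ and, via the symmetry $d_{\h\r}^2(t,s) = d_{\r\h}^2(s,t)$, to $s \mapsto d_{\h\r}^2(t,s)$, add the two inequalities so the $\phi_{\h,x}$ and $\phi_{\r,y}$ terms cancel, restrict to the diagonal via the chain rule, and integrate using piecewise smoothness and continuity of $t\mapsto d_{\h\r}^2(t,t)$. The only cosmetic difference is that you spell out the convex-combination step producing the partial derivative bound explicitly, which the paper leaves implicit when applying Lemma \ref{lem:Rbound} for fixed $s$.
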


 \begin{proof}
For each fixed $s \in [0,t_\r^m]$ we obtain for all $t \in [0,t_\h^n] \setminus\{t_\h^0, \ldots, t_\h^n\}$ by \eqref{eq:Rbound},
\begin{align*}
 \frac{\partial}{\partial t} d_{\h\r}^2(t,s) + \phi_{\h,x}(t) - \phi_{\r,y}(s) \leq R_{\h,x}(t).
\end{align*}
Similarly, reversing the roles of $(\h,x)$ and $(\r,y),$ yields for  fixed $t \in [0,t_\r^m]$ and for all $s \in [0,t_\r^m] \setminus\{t_\r^0, \ldots, t_\r^m\},$
\begin{align*}
 \frac{\partial}{\partial s} d_{\r\h}^2(s,t) + \phi_{\r,y}(s) - \phi_{\h,x}(t)
    \leq R_{\r,y}(s).
\end{align*}
Noting that $ d_{\h\r}^2(t,s) = d_{\r\h}^2(s,t),$ we obtain by adding these inequalities, for each $t \in [0,\min\{ t_\h^n, t_\r^m\}] \setminus  \{t_\h^0, \ldots, t_\h^n,  t_\r^0, \ldots, t_\r^m\},$
 \begin{align*}
\frac{d}{d t} d_{\h\r}^2(t,t)
    \leq R_{\h,x}(t) + R_{\r,y}(t).
\end{align*}
Taking into account that $t \mapsto  d_{\h\r}^2(t,t)$ is continuous and piecewise $C^1,$ the result follows by integrating this inequality.
 \end{proof}

The next result contains the main estimate for the proof of the Trotter product formula in Theorem \ref{thm:trotter-formula} below.

 \bpr \label{prop:trotter-est}
Let $w \in \ov{\Dom(\phi)}$ and $\thh, R,S,U,T > 0$ be given. There exists a constant $\widetilde K \in (0,\infty)$ such that for all $x, y \in \Dom(\phi)$ and all discretisations $\h := (h_k)_{k=1}^n$ and  $\r := (r_j)_{j=1}^m $
satisfying
\begin{align*}
 |\h|\leq \frac18 \thh, \qquad
 d^2(x_\h^0, w) \leq R, \qquad
 \phi(x_\h^0) \leq S, \qquad
       \chi(x_\h^0) \leq U, \qquad
 t_\h^n \leq T;\\
  |\r|\leq \frac18 \thh, \qquad
 d^2(y_\r^0, w) \leq R, \qquad
 \phi(y_\r^0) \leq S, \qquad
 \chi(y_\r^0) \leq U, \qquad
 t_\r^m \leq T,
\end{align*}
we have, for $t \in [0, \min\{t_\h^n, t_\r^m\}],$
\begin{align*}
  d^2(\ox_\h(t), \oy_\r(t))
   \leq  \widetilde K \big(d^2(x, y) + |\h| + |\r| \big).
\end{align*}
 \epr

 \bpf
Let $t \in [0, \min\{t_\h^n, t_\r^m\}]$ and let $k,j \geq 1$ be such that $t \in [t_\h^{k-1},t_\h^k) \cap
[t_\r^{j-1}, t_\r^j),$ where we use the convention that $t_\h^{n+1} := t_\h^n+ 1$ and $t_\r^{m+1} := t_\r^m +1.$ To simplify notation we write
\begin{align*}
  a_{\ot \ut} := d^2(x_\h^k, x_\h^{k-1}),  \qquad
     a_{\os \us} := d^2(y_\r^j, y_\r^{j-1}), \\
  a_{\ut \us} := d^2(x_\h^{k-1}, y_\r^{j-1}), \qquad
     a_{\ot \us} := d^2(x_\h^k, y_\r^{j-1}), \\
  a_{\ut \os} := d^2(x_\h^{k-1}, y_\r^j),  \qquad
     a_{\ot \os} := d^2(x_\h^k, y_\r^j).
\end{align*}
With this notation we have (using $(\sum_{i=1}^n b_i)^2 \leq n \sum_{i=1}^n  b_i^2$),
\begin{align*}
   & d^2(\ox_\h(t), \oy_\r(t))
    = d^2(x_\h^k, y_\r^j)
 \\&    = \big((1-\ell_\h(t))(1-\ell_\r(t))
        + (1- \ell_\h(t))\ell_\r(t)
 \\& \quad
        + \ell_\h(t)(1-\ell_\r(t)) + \ell_\h(t) \ell_\r(t) \big) a_{\ot \os}
 \\& \leq 3(1-\ell_\h(t))(1-\ell_\r(t))
               ( a_{\ot \ut} + a_{\ut \us}  +   a_{\os \us} )
 \\& \quad
        + 2 (1- \ell_\h(t))\ell_\r(t) (a_{\ot \ut} + a_{\ut \os})
 \\& \quad
        + 2 \ell_\h(t)(1-\ell_\r(t)) (a_{\ot \us} + a_{\os \us})
        + \ell_\h(t) \ell_\r(t) a_{\ot \os}
 \\& \leq 3(1-\ell_\h(t))(1-\ell_\r(t))
           ( a_{\ot \ut} + a_{\ut \us} + a_{\os \us} )
 \\& \quad
        + 3 (1- \ell_\h(t))\ell_\r(t) (a_{\ot \ut} + a_{\ut \os})
 \\& \quad
        + 3 \ell_\h(t)(1-\ell_\r(t)) (a_{\ot \us} + a_{\os \us})
        + 3 \ell_\h(t) \ell_\r(t) a_{\ot \os}
 \\&    = 3 d_{\h\r}^2(t,t)
        + 3(1-\ell_\h(t)) d^2(x_\h^k, x_\h^{k-1})
        + 3(1-\ell_\r(t)) d^2(y_\r^j, y_\r^{j-1}).
\end{align*}
To complete the proof, we will estimate each of the terms at the right-hand
side. Using Corollary \ref{cor:d-est}, Lemma \ref{lem:R-estimate}, Proposition \ref{prop:a-priori} and $(A_3)$ with $h_* = \frac18 \thh,$
 \begin{align*}
   &    d_{\h\r}^2(t,t)
 \\& \leq d^2(x_\h^0, y_\r^0)
        + \int_0^t R_{\h,x}(s) + R_{\r,y}(s)  \, ds
 \\& \leq d^2(x_\h^0, y_\r^0)
        + \int_0^{t_\h^k} [R_{\h,x}(s)]^+  \, ds
        + \int_0^{t_\r^j} [R_{\r,y}(s)]^+\, ds
 \\& \leq d^2(x_\h^0, y_\r^0)
        + |\h|\big(\phi(x_\h^0) - \phi(x_\h^k)
          + 2 \Delta_{\h,x}^k\big)
        +|\r|\big(\phi(y_\r^0) - \phi(y_\r^j)
          + 2 \Delta_{\r,y}^j \big)
 \\& \leq d^2(x_\h^0, y_\r^0)
        + (\widetilde C + K ) ( |\h| + |\r| ),
 \end{align*}
where $\tilde C$ and $K$ are the constants from \eqref{eq:a-priori-two} and $(A_3)$ respectively.
By another application of \eqref{eq:a-priori-two},
 \begin{align*}
  d^2(x_\h^k, x_\h^{k-1})
   \leq |\h| \sum_{j=1}^n \frac{1}{h_j} d^2(x_\h^j, x_\h^{j-1})
   \leq \frac 43 \widetilde C |\h|,
 \end{align*}
and similarly,
 \begin{align*}
  d^2(y_\r^j, y_\r^{j-1})
   \leq \frac 43 \widetilde C |\r|,
 \end{align*}
which completes the proof.
 \epf

The following elementary lemma will be used in the proof of Theorem \ref{thm:trotter-formula} below.

\begin{lemma}\label{lem:sequences}
Let $\{a_i\}_{i \geq 1}$ and $\{b_i\}_{i \geq 1}$ be sequences in $X$ converging to the same limit $c \in X.$ Let $\{\lambda_i\}_{i \geq 1}$ be a sequence in $[0,1],$ and let $\psi : X \to \R \cup \{ + \infty \}$ be lsc functional which is not identically $+\infty.$ Then 
\begin{align*}
 \psi(c) \leq \liminf_{i \to \infty} \big((1 - \lambda_i) \psi(a_i) + \lambda_i \psi(b_i)\big).
\end{align*}
\end{lemma}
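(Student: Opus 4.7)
The statement is essentially a convex-combination version of lower semicontinuity, so my plan is to reduce to the standard lsc fact by a simple comparison argument.

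First I would dispose of the trivial case in which the right-hand side is $+\infty$ (the inequality then holds automatically) and otherwise fix an arbitrary real number $M < \psi(c)$. Using that $\psi$ is lower semicontinuous at $c$ and that $a_i \to c$ and $b_i \to c$, I pick $N$ large enough that $\psi(a_i) > M$ and $\psi(b_i) > M$ for all $i \geq N$; both inequalities are meaningful regardless of whether the individual values are finite or $+\infty$, since $M \in \R$.

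The key step is then to observe that since $\lambda_i \in [0,1]$, for each $i \geq N$ the convex combination satisfies
\begin{align*}
 (1-\lambda_i)\psi(a_i) + \lambda_i \psi(b_i) > M,
\end{align*}
where I use the natural convention $0 \cdot (+\infty) = 0$ (so that when, say, $\lambda_i = 1$ and $\psi(a_i) = +\infty$, the expression collapses to $\psi(b_i) > M$, and when $0 < \lambda_i < 1$ an infinite value of either $\psi(a_i)$ or $\psi(b_i)$ makes the combination $+\infty$, which is also $> M$). This bound passes to the limit inferior, giving $\liminf_i \big((1-\lambda_i)\psi(a_i) + \lambda_i \psi(b_i)\big) \geq M$, and letting $M \uparrow \psi(c)$ completes the proof.

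There is no real obstacle here; the only subtlety is the bookkeeping around $+\infty$ values and the degenerate choices $\lambda_i \in \{0,1\}$, but this is handled cleanly by the case distinction above (or equivalently by the extended-arithmetic convention). Convergence of the sequence $\lambda_i$ is not needed, nor is any structure on $X$ beyond what is implicit in the definition of lsc.
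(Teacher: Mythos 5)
Your proof is correct and uses essentially the same argument as the paper: lower semicontinuity provides a neighborhood of $c$ on which $\psi$ exceeds a threshold, and convex combinations preserve that lower bound. The only cosmetic difference is that you unify the two cases by working with an arbitrary $M < \psi(c)$, whereas the paper treats $\psi(c) \in \R$ (with threshold $\psi(c) - \eps$) and $\psi(c) = +\infty$ (with threshold $M$) as explicitly separate cases.
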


\begin{proof}

Suppose that  $\psi(c) \in \R$ (resp. $\psi(c) = + \infty.$)
Let $\eps  > 0$ (resp. let $M > 0$).
Since $\psi$ is lsc, we can find $\delta > 0$ such that $\psi(x) \geq \psi(c) - \eps$ (resp. $\psi(x) \geq M$) whenever $d(c,x) < \delta.$ 
Since $a_i, b_i \to x$ as $i \to \infty,$ we can take $N \geq 1$ such that $d(a_i,x) < \delta$ and $d(b_i, x) < \delta$ for all $i \geq N.$
Consequently, for all $i \geq N$ we have $\psi(a_i) \geq \psi(c) - \eps$ (resp. $\psi(a_i) \geq M$) and $\psi(b_i) \geq \psi(c) - \eps$ (resp. $\psi(b_i) \geq M$).
It follows that $(1 - \lambda_i) \psi(a_i) + \lambda_i \psi(b_i) \geq \psi(c) - \eps$ (resp. $(1 - \lambda_i) \psi(a_i) + \lambda_i \psi(b_i) \geq M$), which implies the result.
\end{proof}

 \bpf[Proof of Theorem \ref{thm:trotter-formula}]
Note that the sequences  $\{ \uxi_{\h^i}(t) \}_{i \geq 1}$ and $\{ \oxi_{\h^i}(t) \}_{i \geq 1}$ are well-defined on $[0,T_*]$ as a consequence of $(i).$
Proposition \ref{prop:trotter-est} and $(ii)$ imply that the sequence $\{ \oxi_{\h^i}(t) \}_{i \geq 1}$ is a Cauchy sequence, even uniformly in $t \in [0,T_*].$
Using the completeness of $(X, d)$ there exists a limit $u(t),$ $t \in [0,T_*],$ which is right-continuous. From \eqref{eq:a-priori-two} we have $d^2( x_\h^k, x_\h^{k-1}) \leq \frac43 \widetilde{C} h_k.$ This implies that the sequence $\{ \uxi_{\h^i}(t) \}_{i \geq 1}$ converges to the same limit $u(t),$ $t \geq 0,$ which is also left-continuous on $[0,T_*].$ Assumption $(iii)$ implies that $u(0) =x.$

Next we show that $\phi \circ u \in L^1((0,T_*); \R).$ Since $\phi$ is lsc and $u$ is continuous, the function $\phi \circ u : [0,T_*] \to \R$ is lsc, hence Borel measurable and bounded from below.
From \eqref{eq:deelpartitie} we obtain for $t \in [0,T_*],$ 
\begin{align*}
  \phi( \oxi_{\h^i}(t) ) \leq \phi(x^i) + \Delta_{\h^i, x^i}^{n^i}.
\end{align*}
Assumption $(v)$ implies $K_1 := \sup_{i \geq 1} \phi(x^i) < \infty.$ Using assumption $(A_3)$ with $h_* := \sup_{i \geq 1} |\h^i|,$ $T := T_* + h_*,$ $w := x,$ $R := \sup_{i \geq 1} d^2(x^i, x),$ $U := \sup_{i \geq 1} \chi(x^i),$ which is finite by $(iv),$ we have $K_2 := \sup_{i \geq 1} \Delta_{\h^i, x^i}^{n^i} < \infty.$
Therefore
\begin{align*}
 \phi(u(t)) \leq \liminf_{i \to \infty} \phi( \oxi_{\h^i}(t) )
   \leq \sup_{i \geq 1} \phi(x^i) 
       +\sup_{i \geq 1}  \Delta_{\h^i, x^i}^{n^i} 
       \leq K_1 + K_2. 
\end{align*}
It follows that $\phi \circ u$ is bounded from above, which implies the claim.

It remains to show that $(u(t))_{t > 0}$ is a solution to \eqref{eq:gradientFlow}. This will be done by passing to the limit in \eqref{eq:Rbound}. 
Let $\zeta \in C_c^\infty((0,T_*);\R)$ be non-negative. Take $y \in \Dom(\phi)$ and note that, by what we just proved,
\begin{align*}
 \lim_{i \to \infty} d_{\h^i,x^i}^2(t,y) = d^2(u(t), y), \quad
   \text{ uniformly on $[0,T_*].$}
\end{align*}
Consequently, the mapping $t \mapsto \zeta'(t) d^2(u(t), y)$ is continuous (hence integrable) on $[0,T_*]$ and
\begin{align} \label{eq:final-one}
     \lim_{i \to \infty}  \int_0^{T_*}  \zeta'(t) d_{\h^i,x^i}^2(t,y) \,dt
  & =  \int_0^{T_*}  \zeta'(t) d^2(u(t), y) \, dt.
\end{align}
Using the second inequality of \eqref{eq:a-priori-two} and the lower semicontinuity of $\phi,$ we infer that there exist constants $C, \widetilde C \in \R$ not depending on $t \in [0,T_*]$ and $i \geq 1$ such that
\begin{align*}
 \phi_{\h^i,x^i}(t) \geq \phi(x^i) - \widetilde C \geq C.
\end{align*}
Since $\zeta$ is non-negative, it thus follows that the functions $t \mapsto \zeta(t) \phi_{\h^i,x^i}(t)$ are bounded from below, uniformly in $i.$ Therefore we may apply Fatou's Lemma to obtain
\begin{align*}
  \liminf_{i \to \infty}  \int_0^{T_*}
                  \zeta(t) \phi_{\h^i,x^i}(t) \, dt
    \geq \int_0^{T_*} \liminf_{i \to \infty}
                  \zeta(t) \phi_{\h^i,x^i}(t) \, dt.
\end{align*} 
Applying Lemma \ref{lem:sequences} with $a_i := \uxi_{\h^i}(t) ,$ $b_i :=  \oxi_{\h^i}(t) ,$ $\lambda_i := \ell_{\h^i}(t),$ and $\psi := \phi,$ we infer that 
\beq \bal \label{eq:final-two}
  \liminf_{i \to \infty}  \int_0^{T_*}
                  \zeta(t) \phi_{\h^i,x^i}(t) \, dt
 \geq \int_0^{T_*} \zeta(t) \phi(u(t)) \, dt.
\eal \eeq
Combining \eqref{eq:final-one} and \eqref{eq:final-two}, integrating by parts, using Lemma \ref{lem:Rbound}, Lemma \ref{lem:R-estimate}, \eqref{eq:a-priori-two} and $(A_3),$  we arrive at
\begin{align*}
  \int_0^{T_*} &
         -  \zeta'(t) d^2(u(t), y)
          +   \zeta(t)  \phi(u(t)) \, dt
 \\ & \leq
  \liminf_{i \to \infty}  \int_0^{T_*}
         -  \zeta'(t) d_{\h^i,x^i}^2(t,y)
          +   \zeta(t) \phi_{\h^i,x^i}(t) \, dt
 \\ & =
  \liminf_{i \to \infty}  \int_0^{T_*}
         \zeta(t) \bigg( \frac{d}{dt}d_{\h^i,x^i}^2(t,y)
          +   \phi_{\h^i,x^i}(t) \bigg) \, dt
 \\ & =
  \liminf_{i \to \infty}  \int_0^{T_*}
         \zeta(t) \big( \phi(y) +  R_{\h^i,x^i}(t) \big) \, dt
 \\ & \leq \liminf_{i \to \infty}  \int_0^{T_*}
         \zeta(t) \big(\phi(y)
          + (\widetilde C + K)  |\h^i| \big)
              \, dt
 \\ & =  \int_0^{T_*}
         \zeta(t) \phi(y) \, dt,
\end{align*}
which shows in view of Remark \ref{rem:gradientFlow} that $(u(t))_{t > 0}$ satisfies \eqref{eq:gradientFlow}.
 \epf

 \bpf[Proof of Proposition \ref{prop:suff-for-A4}]
Let $w \in \ov{\Dom(\phi)}$ and  $h_*, T, R, U  > 0$ be given, and take  $x \in \Dom(\phi)$ and a discretisation $\h \subseteq (0, \infty)$ satisfying \eqref{eq:quant1} and \eqref{eq:quant2}.

(1)
Since $c, h_k \geq 0$ for $k =1, \ldots, n,$ we have
\begin{align*}
 \Delta_{\h, x}^n
   &  = \sum_{k=1}^n [\phi^1(x_\h^k)
                        - \phi^1(\hx_\h^k)]^+
\leq c \sum_{k=1}^n h_k
 \leq \frac12cT,
\end{align*}
which implies $(A_3)$ with $\chi = constant.$

(2)
By assumption we have $\phi^1(x_\h^k) \leq e^{\alpha h_k} \phi^1(\hx_\h^k)$ for $k =1, \ldots, n,$ and therefore
\begin{align*}
 \phi^1(x_\h^k) - \phi^1(\hx_\h^k)
  \leq (e^{\alpha h_k} -1) \phi^1(\hx_\h^k).
\end{align*}
Since the right-hand side is non-negative, we have 
\begin{align*}
 [\phi^1(x_\h^k) - \phi^1(\hx_\h^k)]^+
  \leq (e^{\alpha h_k} -1) \phi^1(\hx_\h^k).
\end{align*}
Moreover, it follows by induction that $\phi^1(\hx_\h^k) \leq e^{\frac12 \alpha t_\h^{k-1}} \phi^1(x)$ for $k =1, \ldots, n.$ Consequently,
\begin{align*}
 \Delta_{\h, x}^n
   &  = \sum_{k=1}^n [\phi^1(x_\h^k)
                        - \phi^1(\hx_\h^k)]^+
   \leq \sum_{k=1}^n (e^{\alpha h_k} - 1) \phi^1(\hx_\h^k)
  \\&
   \leq \sum_{k=1}^n (e^{\alpha h_k} - 1)
                        e^{\frac12 \alpha t_\h^{k-1}} \phi^1(x)
      = \sum_{k=1}^n (e^{\frac12 \alpha t_\h^k}
                          - e^{\frac12 \alpha t_\h^{k-1}})
                        \phi^1(x)
  \\& \leq (e^{\frac12 \alpha T} -1 )\phi^1(x),
\end{align*}
which proves $(A_3)$ with $\chi = \phi^1.$

(3)
Note that $(i)$ implies that $\delta_{\h,x}^k =  [\phi^1(x_\h^k)
                        - \phi^1(\hx_\h^k)]^+ \leq c h_k \phi^2(x_\h^k),$ since $c h_k \phi^2(x_\h^k)$ is non-negative. Moreover, using $(ii)$ and induction it follows that $\phi^2(x_\h^k) \leq e^{\frac12 \alpha t_\h^k} \phi^2(x).$ Using these estimates we obtain
\begin{align*}
 \Delta_{\h, x}^n
   & \leq c \sum_{k=1}^n h_k \phi^2(x_\h^k)
     \leq c \phi^2(x) \sum_{k=1}^n h_k e^{\frac12 \alpha t_\h^k}
 \\& \leq c \phi^2(x) \int_{h_1}^{\frac12 t_\h^n + h_n} e^{\alpha s} \, ds
     \leq c \phi^2(x) \int_0^T e^{\alpha s} \, ds,
\end{align*}
which proves $(A_3)$ with $\chi = \phi^2.$
 \epf

 \section{Applications} \label{sec:applications}

In this section we will apply the Trotter product formula from Theorem \ref{thm:trotter-formula} in several concrete situations.

 \subsection*{The Wasserstein space}

Let $\calP_2(\R^d)$ denote the set of all Borel
probability measures $\mu$ on $\R^d,$ $d \geq 1,$ satisfying $\int_{\R^d}
|x|^2\, d\mu(x) < \infty.$ We consider the $L^2$-Wasserstein distance defined for $\mu, \nu \in \calP_2(\R^d)$  by
 \begin{align} \label{eq:WassersteinDistance}
  W_2(\mu,\nu) :=
  \inf\bigg\{ \bigg(\int_{\R^{d} \times \R^d}
   |x_1-x_2|^2 \, d \Sigma(x_1, x_2) \bigg)^{1/2} :
     \Sigma \in \G(\mu,\nu)  \bigg\}.
 \end{align}
Here $\G(\mu,\nu)$ denotes the collection of probability measures
$\S$ on $\R^{d} \times \R^d$ with marginals $\mu$ and $\nu,$ i.e.,
for all Borel sets $A, B \subseteq \R^d,$
 \begin{align*}
   \S(A \times \R^d) = \mu(A), \qquad    \S(\R^d \times B) = \nu(B).
 \end{align*}
Endowed with the metric $W_2,$ $\calP_2(\R^d)$ is a complete
separable metric space. For a Borel mapping $T : \R^d \to
\R^n$ and a Borel probability measure $\mu$ on $\R^d$ we write
$T_\# \mu$ to denote the image measure on $\R^n$ defined by
$T_\# \mu (B) := \mu(T^{-1}(B))$ for a Borel set $B \subseteq
\R^n.$

The infimum in \eqref{eq:WassersteinDistance} is attained
(see, e.g., \cite[Theorem 4.1]{Vil08}). Moreover, a celebrated
result by Brenier, independently due to Rachev and
R\"uschendorf and later refined by McCann, asserts that if
$\mu$ is absolutely continuous with respect to the Lebesgue
measure $\mathscr{L}^d,$ the minimizer $\S \in \G(\mu,\nu)$ is
unique and can be written as $\S = (I \times \nabla f)_\# \mu$
for some convex function $f: \R^d \to \R.$ We refer to $\nabla
f$ as the optimal map pushing $\mu$ to $\nu.$ Detailed proofs
of these results can be found in \cite[Theorems 9.4 and
10.41]{Vil08}.

We shall present four applications to gradient flows in the metric space $(\calP_2(\R^d), W_2).$

 \subsection*{The functionals}

In all of our examples below we shall consider a convex potential $V \in C^2(\R^d;\R)$ satisfying the (strong) assumption that the Hessian $D^2 V$ is bounded, or equivalently since $V$ is convex, that there exists $c \geq 0$ such that
\begin{align} \label{eq:Delta-bound}
 \Delta V(x) \leq c, \qquad x \in \R^d.
\end{align}
The \emph{potential energy} $\V : \calP_2(\R^d) \to \R$ given by
\begin{align*}
 \V(\mu) &:= \int_{\R^d} V(x) \,
d\mu(x),
\end{align*}
is well-defined, since the assumption on the Hessian implies that $|V|$ is of at most quadratic growth.

We shall also consider the (negative of the) \emph{Boltzmann entropy} $\H : \calP_2(\R^d) \to \R \cup\{ + \infty\}$ given by
\begin{align*}
 \H(\mu) &:= \left\{ \begin{array}{ll}
   \int_{\R^d} \rho(x) \log \rho(x)\, dx,
    &\mu = \rho \mathscr{L}^d, \\
 + \infty, & \text{otherwise},
\end{array} \right.
\end{align*}
and the \emph{R\'{e}nyi entropy} $\F : \calP_2(\R^d) \to \R \cup\{ + \infty\}$ defined for $m >1,$ by 
 \beq\bal \label{eq:Renyi}
 \F(\mu) &:= \left\{ \begin{array}{ll}
   \frac{1}{m-1} \int_{\R^d} \rho^m(x) \, dx,
     &\mu = \rho \mathscr{L}^d, \\
 + \infty, & \text{otherwise}.
\end{array} \right.
 \eal\eeq
 
A famous result by McCann \cite{McC97} asserts that the functionals $\V,$ $\H,$ and $\F$ are displacement convex, that is, convex along geodesics in $\calP_2(\R^d).$ 
In the first part of \cite{AGS}, an abstract theory of gradient flows in metric spaces has been developed for functionals which are convex along interpolating curves, not necessarily geodesics, along which the squared distance function satisfies an appropriate convexity condition. This condition fails for $W_2$-geodesics, but it holds for a different class of interpolating curves along which the functionals $\V,$ $\H,$ and $\F$ are convex as well \cite[Propositions 9.3.2 and 9.3.9]{AGS}. 
For our purpose, it is important to note that as a consequence (see \cite[Theorem 4.1.2]{AGS}), $(A_1)$ is satisfied for any pair of functionals chosen from $\V,$ $\H,$ and $\F$. 

Moreover, the first inclusion of $(A_2)$ is satisfied in this situation, since
\begin{align*}
 \Dom(\V) = \ov{\Dom(\H)} = \ov{\Dom(\F)} = \calP_2(\R^d).
\end{align*} 
 
In order to prove that the second inclusion of $(A_2)$ and $(A_3)$ hold in the examples below, we shall use the following known result which provides formulas for the densities of the resolvents. We let $J_h^\phi$ denote the resolvent associated with a functional $\phi$ defined on $\R^d$ or $\calP_2(\R^d).$

 \blem \label{lem:pushForwardDensity}
Let $\mu \in \calP_2(\R^d)$ and $h > 0.$ 
\ben
\item \label{item:pushForwardDensity-1} 
We have $J_h^\V \mu = (J_h^V)_\# \mu.$
If $\mu = \rho \mathscr{L}^d,$ then $J_h^\V \mu = \rho^1 \mathscr{L}^d,$ where
 \begin{align} \label{eq:density}
 \rho^1(x) = \rho( x + h \nabla V(x)) \det ( I + h D^2 V(x) ),
   \qquad x \in \R^d,
 \end{align}

\item \label{item:pushForwardDensity-2}
We have $J_h^\H \mu = \rho^1
\mathscr{L}^d$ for some $\rho^1 \in W^{1,1}(\R^d).$ Let $T$ be the
optimal map pushing $J_h^\H \mu$ to $\mu.$ Then we have
 \begin{align} \label{eq:Tudorascu}
 h  \nabla \rho^1(x) = (T(x) - x)\rho^1(x),
  \quad
\mathscr{L}^d\text{-a.e.}
 \end{align}

\item \label{item:pushForwardDensity-3}
We have $J_h^\F \mu = \rho^1
\mathscr{L}^d$ for some $\rho^1 \in W^{1,1}(\R^d).$ Let $T$ be the
optimal map pushing $J_h^\F \mu$ to $\mu.$ Then we have
 \begin{align} \label{eq:Tudorascu-ext}
 h  \nabla \big(\rho^1\big)^m(x) = (T(x) - x)\rho^1(x),
  \quad
\mathscr{L}^d\text{-a.e.}
 \end{align}
\een 
 \elem

 \bpf
The result follows from \cite[Lemma 10.1.2. and Theorem 10.4.6]{AGS}; see also \cite[Proposition 3]{Tud08}. For the convenience of the reader we provide a simple direct proof of \eqref{item:pushForwardDensity-1}.
 
For $x \in \R^d$ we use the fact that $J_h^V(x) = (I + h \nabla
V)^{-1}(x)$ to obtain
 \begin{align*}
  V(J_h^V(x)) + \frac{1}{2h} |x - J_h^V(x)|^2
      \leq  V(y) + \frac{1}{2h} |x - y|^2,
 \end{align*}
for any $y \in \R^d.$
Take $\nu \in \calP_2(\R^d)$ and let $T$ be the optimal map pushing $\mu$
to $\nu.$ Using the estimate above we obtain
 \begin{align*}
   \V((J_h^V)_\# \mu)  + \frac{1}{2h} W_2^2( \mu, (J_h^V)_\#\mu )
 &    \leq \int_{\R^d}  V(J_h^V(x))
       + \frac{1}{2h}  |x-J_h^V(x)|^2 \, d\mu(x)
  \\&    \leq \int_{\R^d}  V(T(x))
       + \frac{1}{2h}  |x-T(x)|^2 \, d\mu(x)
  \\& =  \V(\nu) + \frac{1}{2h} W_2^2( \mu, \nu),
 \end{align*}
which implies that $J_h^\V \mu  = (J_h^V)_\# \mu.$ This proves the first assertion.

To complete the proof, we note that,  $(I + h \nabla V)_\#(J_h^\V \mu) =  \mu.$
The convexity of $V$ implies that $\det(I + h D^2 V(x)) >0$ for all $x\in \R^d,$ and therefore \eqref{eq:density} follows from the change of variable formula.
 \epf

  \subsection*{Compatibility of the functionals}

It has been shown in \cite{JKO98} that the $L^2$-Wasserstein gradient flow associated with the sum $\H + \V$ solves the Fokker-Planck equation
\begin{align*}
 \partial_t \rho = \Delta \rho + \nabla \cdot (\rho \nabla V)
\end{align*}
in an appropriate sense.
Similarly, in \cite{O01} it has been shown that the gradient flow associated with $\F$ is a solution to the porous medium equation 
\begin{align*}
\partial_t \rho = \Delta \rho^m.
\end{align*}

The following result shows that the assumptions of Proposition \ref{prop:suff-for-A4} are satisfied in several examples.

 \bpr \label{prop:compatible}
Let $h > 0$ and let $c \geq 0$ be as in \eqref{eq:Delta-bound}.
 \ben
\item \label{item:VH}
For $\mu \in \Dom(\H)$ we have
 \begin{align} \label{eq:H-est}
  \H(J_h^\V\mu)  \leq \H(\mu) + ch.
 \end{align}
\item \label{item:HV}
For $\mu \in \calP_2(\R^d)$ we have
 \begin{align} \label{eq:V-est}
  \V(J_h^\H \mu)  \leq \V(\mu)   + ch.
 \end{align}
\item \label{item:pm-one}
For $\mu \in \Dom(\F)$ we have
\begin{align} \label{eq:F-est}
   \F(J_h^\V \mu)
		 \leq e^{(m-1)ch} \F(\mu).
\end{align}
\item \label{prop:pm-two}
For $\mu \in \calP_2(\R^d)$ we have
\begin{align} \label{eq:VF-est}
\V(J_h^\F \mu) \leq \V(\mu) + c(m-1)h \F(J_h^\F \mu).
\end{align}
 \een
 \epr

 \bpf
$(1)$ 
We write $\mu^1 := J_h^\V\mu$ and denote the densities of $\mu$ and
$\mu^1$ by $\rho$ and $\rho^1$ respectively. Using the convexity of $V,$ the inequality $\log\det(I+A) \leq \tr(A)$ which holds for any non-negative symmetric matrix $A,$ and the assumption on $\Delta V,$ we obtain
 \begin{align} \label{eq:pointwise}
  0 \leq \log \det (I + h D^2 V(x))
    \leq h \Delta V(x)
    \leq ch, \qquad x \in \R^d.
 \end{align}
Lemma \ref{lem:pushForwardDensity} implies that
 \begin{align*}
  \H(\mu) & +  \int_{\R^d} \log \det ( I + h D^2 V(x) ) \,
 d\mu^1(x)
 \\ & = \int_{\R^d} \log \rho(x) \, d\mu(x)
        +   \int_{\R^d} \log \det ( I + h D^2 V(x) )   \, d\mu^1(x)
 \\ & = \int_{\R^d} \log \rho(x + h \nabla V(x)) \, d\mu^1(x)
        +   \int_{\R^d} \log \det ( I + h D^2 V(x) )   \, d\mu^1(x)
 \\ & =  \int_{\R^d} \log\rho^1(x)   \, d\mu^1(x)
 \\ & =  \H( \mu^1).
 \end{align*}
The conclusion follows by combining this identity with \eqref{eq:pointwise}.

$(2)$ 
Let $V^+$ and $V^-$ denote the positive and negative part of $V$ respectively. Since $V$ is convex, there exist $k_1, k_2 \geq 0$ such that $|V^-(x)| \leq k_1(1 + k_2 |x|)$ for all $x \in \R^d.$ Since $\mu^1 := J_h^\H \mu \in \calP_2(\R^d),$ this implies that $V^- \in L^1(\R^d;\mu^1).$

To show that $V^+ \in L^1(\R^d;\mu^1)$ as well, let $T$ denote the optimal map pushing $\mu^1 := J_h^\H \mu$ to
$\mu,$ and note that by the convexity of $V,$
 \begin{align*}
 V^+(x) = V(x) + V^-(x)
  \leq  V(T(x)) + \ip{\nabla V(x), x - T(x)} + V^-(x).
 \end{align*}
We claim that the first two summands at the right-hand side are contained in $L^1(\R^d,\mu^1).$

Indeed, since $\mu \in \Dom(\V)$ and $\mu = T_\# \mu^1$ we have $V \circ T \in L^1(\mu^1)$ and
 \begin{align*}
   \int_{\R^d} V(T(x)) \, d\mu^1(x) = \int_{\R^d} V(x) \, d\mu(x).
 \end{align*}
Furthermore, we note that the convexity of $V$ and the upper bound on its Laplacian imply that the Hessian of $V$ is bounded. As a consequence,  $\nabla V$ is of at most linear growth, which in view of the fact that $\mu^1 \in \calP_2(\R^d)$ implies that $\nabla V \in L^2(\R^d,\mu^1;\R^d).$ Since $\|I - T \|_{L^2(\R^d, \mu; \R^d)} = W_2(\mu,\mu^1) < \infty,$ it follows that $\ip{\nabla V, I - T} \in L^1(\R^d,\mu^1)$ by the Cauchy-Schwarz inequality. This proves the claim and we conclude that $V^+ \in L^1(\R^d;\mu^1).$

Lemma \ref{lem:pushForwardDensity}(\ref{item:pushForwardDensity-2}) implies that $\mu^1 = \rho^1
\mathscr{L}^d$ for some $\rho^1 \in W^{1,1}(\R^d).$
Using the
convexity of $V$ and \eqref{eq:Tudorascu},
 \beq\bal \label{eq:nablaInt}
 \V(\mu^1)
    & = \int_{\R^d} V(x) \, d\mu^1(x)
 \\ & \leq \int_{\R^d} V(T(x)) - \ip{\nabla V(x),T(x) - x}
            \, d\mu^1(x)
 \\ &    = \V(\mu)
               - h \int_{\R^d} \ip{\nabla V(x), \nabla \rho^1(x)}
                 \, dx.
 \eal\eeq

We shall show that
 \beq\bal  \label{eq:GaussAgain}
 \int_{\R^d} \big\langle \nabla V(x),
 \nabla\rho^1(x) \big\rangle   \, d x
 = - \int_{\R^d} \Delta V(x) \rho^1(x) \, dx.
 \eal\eeq
Then \eqref{eq:V-est} follows by combining
\eqref{eq:nablaInt}, \eqref{eq:GaussAgain}, and the condition
$\Delta V \leq c$.

To prove \eqref{eq:GaussAgain}, we set $n(y) = \frac{y}{|y|}$
for $y \neq 0,$ and apply the Gauss-Green theorem in the ball $B_R := \{ x \in \R^d : |x| < R\}$ to write for
$R > 0,$
 \beq\bal \label{eq:Gauss}
 \int_{\partial B_R} \rho^1(y) \big\langle \nabla V(y),
 n(y) \big\rangle   \, d S(y) & =
 \int_{B_R} \big\langle \nabla V(x),
 \nabla\rho^1(x) \big\rangle   \, d x
\\ & \qquad
+ \int_{B_R} \Delta V(x) \rho^1(x) \, dx.
 \eal\eeq
Now we observe that
 \begin{align*}
  \ip{\nabla V,\nabla \rho^1} = \frac{1}{h}\ip{\nabla V,T - I}\rho^1 \in L^1(\R^d),
 \end{align*}
and $ \rho^1 \Delta V \in L^1(\R^d)$ as a consequence of the
assumption that $0 \leq \Delta V \leq c.$ Therefore the
dominated convergence theorem implies that the right hand side
of \eqref{eq:Gauss} converges as $R \to \infty$. In particular it follows
that
 \begin{align} \label{eq:theOne}
   L := \lim_{R \to \infty}
     \int_{\partial B_R} \rho^1(y) \big\langle \nabla V(y),
 n(y) \big\rangle   \, d S(y)
 \end{align}
exists. On the other hand, since we already showed that $\nabla V \in L^2(\R, \mu^1;\R^d),$ it follows that
$\rho^1 \ip{\nabla V,n} \in L^1(\R^d)$, hence the coarea
formula (see, e.g., \cite[Proposition 1, p.118]{EG92}) implies
that for a.e. $R
>0,$
 \begin{align} \label{eq:theOther}
     \int_{\partial B_R} \rho^1(y) \big\langle \nabla V(y),
 n(y) \big\rangle   \, d S(y)
     = \frac{d}{dR} \bigg(
      \int_{B_R}  \rho^1(x) \big\langle \nabla V(x),
 n(x) \big\rangle  \, dx \bigg).
 \end{align}
Combining \eqref{eq:theOne} and \eqref{eq:theOther} we
conclude (as in e.g. \cite[Proof of Theorem 1]{Tud08})  that
$L = 0,$ and therefore \eqref{eq:GaussAgain} follows from
\eqref{eq:Gauss}.
 
(3) 
Write $\mu^1 := J_h^\V \mu$ and $\mu^1 = \rho^1 \mathscr{L}^d.$ Using the convention that $t^{m-1} = 0$ if $t = 0,$ it follows from Lemma \ref{lem:pushForwardDensity} that
\beq\bal\label{eq:F-formula}
 \F(\mu^1)
    & = \frac{1}{m-1} \int_{ \R^d} (\rho^1(x))^{m-1} \, d\mu^1(x)
  \\& = \frac{1}{m-1} \int_{\R^d}
  \Big(\rho(x + h \nabla V(x))
              \det(I + h D^2 V(x))\Big)^{m-1} \, d\mu^1(x)
  \\& = \frac{1}{m-1} \int_{\R^d} \Big(\rho(y)
              \det\big(I + h D^2 V( J_h^V(y) ) \big)\Big)^{m-1}
                 \, d\mu(y).
\eal\eeq
Using the inequality $\det(I+A) \leq e^{\tr(A)},$ which holds for any non-negative symmetric matrix $A,$ we obtain
\begin{align}
   \F(\mu^1)
\leq \frac{e^{(m-1)ch}}{m-1} \int_{\R^d} \big(\rho(y) \big)^{m-1}
               \, d\mu(y)
 = e^{(m-1)ch} \F(\mu),
\end{align}
which proves \eqref{eq:F-est}.

(4) 
As the proof is very similar to the proof of (2), we will only give a sketch of the argument.

Set $\mu^1 := J_h^\F \mu.$ Arguing as in the proof of (2), we infer that $V^-$ and $V^+$ are contained in $L^1(\R^d;\mu^1).$
Furthermore, Lemma \ref{lem:pushForwardDensity}(\ref{item:pushForwardDensity-3}) implies that $\mu^1 = \rho^1
\mathscr{L}^d$ for some $\rho^1 \in W^{1,1}(\R^d).$
Using the convexity of $V$ and \eqref{eq:Tudorascu-ext},
 \begin{align*}
  \V(\mu^1)
    & = \int_{\R^d} V(x) \, d\mu^1(x)
 \\ & \leq \int_{\R^d} V(T(x)) - \ip{\nabla V(x),T(x) - x}
            \, d\mu^1(x)
 \\ &    = \V(\mu)
               - h \int_{\R^d} \ip{\nabla V(x), \nabla (\rho^1)^m(x)}
                 \, dx.
 \end{align*}
As in the proof of \eqref{item:HV}, it follows that
 \beq\bal  \label{eq:GaussAgain-again}
 \int_{\R^d} \big\langle \nabla V(x),
 \nabla(\rho^1)^m(x) \big\rangle   \, d x
 = - \int_{\R^d} \Delta V(x) \rho^1(x)^m \, dx,
 \eal\eeq
and consequently,
\begin{align*}
 \V(\mu^1)
   & \leq \V(\mu) + h \int_{\R^d} \Delta V(x) (\rho^1(x))^m \, dx
 \\& \leq \V(\mu) + ch\int_{\R^d} (\rho^1(x))^m \, dx
    =    \V(\mu) + c(m-1) h \F(\mu^1).
\end{align*}
 \epf

 \begin{remark}
The estimates \eqref{eq:nablaInt}--\eqref{eq:GaussAgain}
extend a recent result by Tudorascu \cite[Theorem 1]{Tud08}, who
considered the special case $V(x) = \frac12 |x|^2.$
 \end{remark}

\begin{theorem}\label{thm:trotter-appl}
In each of the following four cases, the functionals $\phi^1$ and $\phi^2$ satisfy $(A_1),$ $(A_2),$ and $(A_3)$:
\ben[]
 \item (1)    $\phi^1 = \H$ and $\phi^2 = \V,$ \qquad \qquad 
 		(3)	  $\phi^1 = \F$ and $\phi^2 = \V,$
 \item 	 (2)  $\phi^1 = \V$ and $\phi^2 = \H,$ \qquad \qquad 
 		  (4) $\phi^1 = \V$ and $\phi^2 = \F.$
\een
As a consequence, the Trotter product formula from Theorem \ref{thm:trotter-formula} holds.
\end{theorem}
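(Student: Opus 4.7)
The plan is to verify hypotheses $(A_1)$, $(A_2)$, and $(A_3)$ for each of the four pairs $(\phi^1,\phi^2)$ and then invoke Theorem \ref{thm:trotter-formula}. The assumption $(A_1)$ requires nothing further: the discussion preceding Lemma \ref{lem:pushForwardDensity} already established, via \cite[Theorem 4.1.2]{AGS}, that $(A_1)$ holds for any functional drawn from $\{\V, \H, \F\}$.

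For $(A_2)$ I would exploit the equalities $\Dom(\V) = \ov{\Dom(\H)} = \ov{\Dom(\F)} = \calP_2(\R^d)$ noted in the same paragraph. These make the first inclusion of $(A_2)$ automatic in all four cases. The second inclusion is also automatic in cases (2) and (4), since there $\Dom(\phi^1) = \Dom(\V) = \calP_2(\R^d)$. In cases (1) and (3) the second inclusion reduces to showing that $J_h^\V$ preserves $\Dom(\H)$ and $\Dom(\F)$ respectively, and this follows at once from the bounds \eqref{eq:H-est} and \eqref{eq:F-est} of Proposition \ref{prop:compatible}.

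For $(A_3)$ I would match each of the four cases to one of the three sufficient conditions in Proposition \ref{prop:suff-for-A4}. In case (1) the bound \eqref{eq:H-est} of Proposition \ref{prop:compatible}(\ref{item:VH}) is precisely condition (1) of Proposition \ref{prop:suff-for-A4}. The same condition covers case (2), now via \eqref{eq:V-est} of Proposition \ref{prop:compatible}(\ref{item:HV}). In case (3), the non-negativity of $\F$ (immediate from $m>1$ in \eqref{eq:Renyi}) combined with \eqref{eq:F-est} matches condition (2) of Proposition \ref{prop:suff-for-A4} with $\alpha=(m-1)c$. In case (4) one again uses $\F\geq 0$; then \eqref{eq:VF-est} and \eqref{eq:F-est} deliver, respectively, parts (i) and (ii) of condition (3) of Proposition \ref{prop:suff-for-A4}, with $c(m-1)$ in place of $c$ and $\alpha=(m-1)c$.

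There is no real obstacle: the analytic content has already been packaged into Propositions \ref{prop:suff-for-A4} and \ref{prop:compatible}, and the present theorem reduces to a bookkeeping exercise that identifies, for each pair, which part of Proposition \ref{prop:suff-for-A4} applies. Once $(A_1)$--$(A_3)$ are in hand, the stated convergence of the Trotter splitting scheme is an immediate application of Theorem \ref{thm:trotter-formula}.
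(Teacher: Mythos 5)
Your proposal is correct and follows essentially the same route as the paper: $(A_1)$ from the AGS framework, $(A_2)$ from the domain identities and Proposition~\ref{prop:compatible}, and $(A_3)$ by matching each pair to the corresponding sufficient condition of Proposition~\ref{prop:suff-for-A4}. Your observation that $(A_2)(ii)$ is automatic in cases (2) and (4), since $\Dom(\V)=\calP_2(\R^d)$, is in fact a slightly cleaner account than the paper's, which cites Proposition~\ref{prop:compatible} for $(A_2)(ii)$ in all four cases.
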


\begin{proof} 
In case (1), Proposition \ref{prop:compatible}(1) implies $(A_2)(ii)$ and Assumption $(1)$ of Proposition  \ref{prop:suff-for-A4}, hence $(A_3).$

In case (2), Proposition \ref{prop:compatible}(2) implies $(A_2)(ii)$ and Assumption $(1)$ of Proposition  \ref{prop:suff-for-A4}, hence $(A_3).$

In case (3), Proposition \ref{prop:compatible}(3) implies $(A_2)(ii)$ and Assumption $(2)$ of Proposition  \ref{prop:suff-for-A4}, hence $(A_3).$

In case (4), Proposition \ref{prop:compatible}(3 \& 4) implies $(A_2)(ii)$ and Assumption $(3)$ of Proposition  \ref{prop:suff-for-A4}, hence $(A_3).$
\end{proof}

 \bibliographystyle{ams-pln}
\bibliography{trotter}

\providecommand{\bysame}{\leavevmode\hbox to3em{\hrulefill}\thinspace}
\begin{thebibliography}{10}

\bibitem{AGS}
L.~Ambrosio, N.~Gigli, and G.~Savar{\'e}, \emph{Gradient flows in metric spaces
  and in the space of probability measures}, second ed., Lectures in
  Mathematics ETH Z\"urich, Birkh\"auser Verlag, Basel, 2008.

\bibitem{Bre73}
H.~Brezis, \emph{Op\'erateurs maximaux monotones et semi-groupes de
  contractions dans les espaces de {H}ilbert}, North-Holland Publishing Co.,
  Amsterdam, 1973, North-Holland Mathematics Studies, No. 5. Notas de
  Matem\'atica (50).

\bibitem{Cl-Bielefeld}
Ph. Cl{\'e}ment, \emph{Introduction to gradient flows in metric spaces
  ({I}{I})}, available at
  https://igk.math.uni-bielefeld.de/study-materials/notes-clement-part2.pdf.

\bibitem{ClD10}
Ph. Cl{\'e}ment and W.~Desch, \emph{Some remarks on the equivalence between
  metric formulations of gradient flows}, to appear in {B}ollettino della
  {U}nione {M}atematica {I}taliana.

\bibitem{EG92}
L.~C. Evans and R.~F. Gariepy, \emph{Measure theory and fine properties of
  functions}, Studies in Advanced Mathematics, CRC Press, Boca Raton, FL, 1992.

\bibitem{JKO98}
R.~Jordan, D.~Kinderlehrer, and F.~Otto, \emph{The variational formulation of
  the {F}okker-{P}lanck equation}, SIAM J. Math. Anal. \textbf{29} (1998),
  no.~1, 1--17.

\bibitem{KaMa78}
T.~Kato and K.~Masuda, \emph{Trotter's product formula for nonlinear semigroups
  generated by the subdifferentials of convex functionals}, J. Math. Soc. Japan
  \textbf{30} (1978), no.~1, 169--178.

\bibitem{McC97}
R.~J. McCann, \emph{A convexity principle for interacting gases}, Adv. Math.
  \textbf{128} (1997), no.~1, 153--179.

\bibitem{O01}
F.~Otto, \emph{The geometry of dissipative evolution equations: the porous
  medium equation}, Comm. Partial Differential Equations \textbf{26} (2001),
  no.~1-2, 101--174.

\bibitem{Tud08}
A.~Tudorascu, \emph{On the {J}ordan-{K}inderlehrer-{O}tto variational scheme
  and constrained optimization in the {W}asserstein metric}, Calc. Var. Partial
  Differential Equations \textbf{32} (2008), no.~2, 155--173.

\bibitem{Vil08}
C.~Villani, \emph{Optimal transport, old and new}, Grundlehren der
  Mathematischen Wissenschaften, vol. 338, Springer-Verlag, 2009.

\end{thebibliography}

 \end{document}